\newtheorem{Theorem}{Theorem}[section]
\newtheorem{Lemma}[Theorem]{Lemma}
\theoremstyle{remark}
\newtheorem{remark}[Theorem]{Remark}
\makeatletter \@addtoreset{equation}{section} \makeatother
\makeindex \setcounter{tocdepth}{2}
\def\rk{\operatorname{rk}}
\def\A{\mathcal{A}}
\def\max{\mbox{\rm max}}
\def\k{\mbox{\rm $\bm{k}$}}
\begin{document}

\begin{center}
{\Large\bf
Several New Generalizations of LYM Inequality
}\\ [7pt]
\end{center}

\begin{center}
	Zihao Huang$^{1}$,
	Weikang Liang$^{2}$,
	Yujiao Ma$^{*3}$
	and Suijie Wang$^{4}$\\[8pt]
	$^{1,2,3,4}$School of Mathematics\\
	Hunan University\\
	Changsha 410082, Hunan, P. R. China\\[12pt]
	
	Emails: $^{1}$zihaoh@hnu.edu.cn, $^{2}$kangkang@hnu.edu.cn, $^{3}$yujiaoma@hnu.edu.cn,  $^{4}$wangsuijie@hnu.edu.cn\\[15pt]

\end{center}

\vskip 3mm
\begin{abstract}
	The LYM inequality is a fundamental result concerning the sizes of subsets in a Sperner family. Subsequent studies on the LYM inequality have been generalized to families of $r$-decompositions, where all components are required to avoid chains of the same length. In this paper, we relax this constraint by allowing components of a family of $r$-decompositions to avoid chains of distinct lengths, and derive generalized LYM inequalities across all the relevant settings, including set-theoretic, $q$-analog, continuous analog, and arithmetic analog frameworks. Notably, the bound in our LYM inequalities does not depend on the maximal length of all forbidden chains.  Moreover, we extend our approach beyond $r$-decompositions to $r$-multichains, and establish analogous LYM inequalities.
	
	\vskip 6pt
	
	\noindent
	{\bf Mathematics Subject Classification: } 05D05
	\\ [7pt]
	{\bf Keywords:}
	LYM inequality, Sperner's theorem, Meshalkin's theorem, antichain
	
\end{abstract}

\section{Background}
Let $B(n)$ be the Boolean lattice, which consists of all subsets of $[n] = \{1, \ldots, n\}$ and is ordered by inclusion. An \emph{antichain} in $B(n)$ is a family of subsets of $[n]$ where no two contain each other.
A fundamental question---what is the maximum size of an antichain in $B(n)$?---is answered by Sperner's Theorem:
\begin{itemize}
	\item[]
	\textbf{Sperner's Theorem} \cite{Sperner}. Any antichain $\mathcal{A}$ in $B(n)$ satisfies
	\begin{equation}
		|\mathcal{A}| \le \binom{n}{\lfloor n/2 \rfloor},	
	\end{equation}
	with equality if and only if $\mathcal{A}$ consists of all subsets of $[n]$ of size $\lfloor n/2 \rfloor$ or all subsets of size $\lceil n/2 \rceil$.
\end{itemize}
This result initiated a rich line of research on the Sperner property, which has since evolved into a well-developed theory within extremal set systems~\cite{Engel}.

The LYM inequality, which naturally implies Sperner's theorem as a special case, was independently obtained by Bollob\'{a}s\cite{Bollobas}, Lubell\cite{Lubell}, Yamamoto\cite{Yamamoto}, and Meshalkin\cite{Meshalkin}, and has since become a more general tool for analyzing antichains in $B(n)$.
\begin{itemize}
	\item[] \textbf{LYM inequality}\cite{Lubell, Meshalkin, Yamamoto, Bollobas}. Any antichain $\mathcal{A}$ in $B(n)$ satisfies the inequality
	\begin{equation}
		\sum_{A \in \mathcal{A}} \frac{1}{\binom{n}{|A|}} \le 1.
	\end{equation}
\end{itemize}

An \emph{$r$-decomposition} of $[n]$ is an $r$-tuple $(D_1, \dots, D_r)$ of disjoint subsets of $[n]$ such that $\bigcup_{i=1}^r D_i = [n]$.  
Given a family $\mathcal{D}$ of $r$-decompositions of $[n]$, we define 
\[
\mathcal{D}_k = \{ D_k \mid (D_1, \ldots,D_k,\ldots, D_r) \in \mathcal{D} \}
\]
for each $k \in \{1,\ldots,r\}$.
Building on Sperner's Theorem and the LYM inequality, Meshalkin\cite{Meshalkin} generalized Sperner's Theorem by extending the idea of antichain properties on single subsets to families of $r$-decompositions of $[n]$.

\begin{itemize}
	\item[] \textbf{Meshalkin's Theorem} \cite{Meshalkin}. 
	Let $\mathcal{D}$ be a family of $r$-decompositions of $[n]$ such that each $\mathcal{D}_k$ forms an antichain, then
	\begin{equation}\label{a1}
		|\mathcal{D}|  \leq  \binom{n}{\underbrace{ \lfloor n/r \rfloor, \dots, \lfloor n/r \rfloor}_{r - b}, \underbrace{ \lfloor n/r \rfloor + 1, \dots, \lfloor n/r \rfloor + 1}_{b}},
	\end{equation}
	where $n \equiv b \pmod{r}$.
\end{itemize}
Hochberg and Hirsch\cite{Hochberg} further extended the classic LYM inequality to a multinomial version, which is tailored to the setting of $r$-decompositions and directly implies Meshalkin's theorem. Their result is stated as follows:
\begin{itemize}
	\item[] \textbf{Hochberg-Hirsch Theorem}\cite{Hochberg}. Let $\mathcal{D}$ be a family of $r$-decompositions of $[n]$ such that each $\mathcal{D}_k$ forms an antichain. Then
		\begin{equation}\label{a2}
		\sum_{(D_1,\ldots,D_r) \in \mathcal{D}} \frac{1}{\binom{n}{|D_1|,\dots,|D_r|}} \le 1.
	\end{equation}
\end{itemize}

A \emph{chain of length~$t$} in $B(n)$ is a collection of $t+1$ subsets of $[n]$ where every pair of subsets is comparable. Recall that an antichain is precisely a family of subsets avoiding chains of length $1$. A family $\mathcal{A} \subseteq B(n)$ is said to be \emph{$t$-chain free} if $ \mathcal{A} $ contains no chain of length $t$. Erd\H{o}s \cite{Erdos},
Rota and Harper\cite{Rota and Harper} further extended the classical Sperner's Theorem and LYM inequality to the setting of $t$-chain free families, yielding the following results:
\begin{itemize}
	\item[] \textbf{Erd\H{o}s' Theorem}\cite{Erdos}. 
	If $\mathcal{A} \subseteq B(n)$ is $t$-chain free, then
	\begin{equation}
		|\mathcal{A}| \leq \binom{n}{\left \lfloor \frac{n+1}{2} \right \rfloor } + \cdots + \binom{n}{\left \lfloor \frac{n+t}{2} \right \rfloor }.
	\end{equation}

	\item[] \textbf{Rota-Harper Theorem}\cite{Rota and Harper}. 
	If $\mathcal{A} \subseteq B(n)$ is $t$-chain free, then
	\begin{equation}\label{b}
		\sum_{A\in \mathcal{A} }^{} \frac{1}{\binom{n}{\left | A \right |}}\leq t.
	\end{equation}
\end{itemize}

Beck and Zaslavsky\cite{Beck and Zaslavsky1} generalized these results further: they considered the families of $r$-decompositions with each $\mathcal{D}_k$ is $t$-chain free, and thus extended these results to a more general setting.
\begin{itemize}
	\item[] \textbf{Beck-Zaslavsky Theorem} \cite{Beck and Zaslavsky1}.  
	Let $\mathcal{D}$ be a family of $r$-decompositions of $[n]$ such that each $\mathcal{D}_k$ is $t$-chain free. Then
	\begin{equation}\label{a3}
		\sum_{(D_1, \ldots, D_r) \in \mathcal{D}} \frac{1}{\binom{n}{|D_1|, \ldots, |D_r|}} \leq t^{r-1}.	
	\end{equation}
	Consequently, $|\mathcal{D}| \leq m_1 + \cdots + m_{t^{r-1}}$ where $m_1, \ldots, m_{t^{r-1}}$ are the $t^{r-1}$ largest multinomial coefficients $\binom{n}{a_1, \ldots, a_r}$ with $a_1 + \cdots + a_r = n$.
\end{itemize}

Up to this point, we have focused on results in the Boolean lattice $B(n)$. The corresponding $q$-analogue is given by the subspace lattice $L_q(n)$ of $\mathbb{F}_q^n$ over a finite field $\mathbb{F}_q$. Paralleling the definition of $t$-chain free families in $B(n)$, a family $\mathcal{A}\subseteq L_q(n)$ is called \emph{$t$-chain free} if $\mathcal{A}$ contains no chain of length $t$ in $L_q(n)$. The number of $k$-dimensional subspaces of $\mathbb{F}_q^n$ is denoted by $\genfrac{[}{]}{0pt}{}{n}{k}_{q}$, called $q$-Gaussian coefficient. In the setting of the subspace lattice $L_q(n)$, Rota and Harper  \cite{Rota and Harper} established the $q$-analog of the LYM inequality as follows:
\begin{itemize}
	\item[] \textbf{Rota-Harper Theorem} \cite{Rota and Harper}.  
	Any $t$-chain free family $\mathcal{A} \subseteq L_q(n)$ satisfies
	\begin{equation}\label{a4}
		\sum_{A \in \mathcal{A}} \frac{1}{\genfrac{[}{]}{0pt}{}{n}{\dim(A)}_{q}} \le t.
	\end{equation}
\end{itemize}
Similarly, the concept of $r$-decompositions of $[n]$ can be extended to the setting of $L_q(n)$.
An \emph{$r$-decomposition} (also called an \emph{$r$-Meshalkin sequence}) of $\mathbb{F}_q^n$ is a tuple $(D_1, \ldots, D_r)$ where each $D_i \in L_q(n)$ and $D_1 \oplus \cdots \oplus D_r =\mathbb{F}_q^n$.
Given a family $\mathcal{D}$ of $r$-decompositions of $\mathbb{F}_q^n$, define
\[
\mathcal{D}_k := \left\{ D_k \mid (D_1, \ldots,D_k,\ldots, D_r) \in \mathcal{D} \right\}
\]
for each $k \in \{1, \ldots, r\}$.
Beck and Zaslavsky\cite{Beck and Zaslavsky2} further proved the $q$-analog of Meshalkin's theorem, stated as follows:
\begin{itemize}
	\item[] \textbf{Beck–Zaslavsky Theorem} \cite{Beck and Zaslavsky2}.  
	Let $\mathcal{D}$ be a family of $r$-decompositions of $\mathbb{F}_q^n$ such that each $\mathcal{D}_k$ is $t$-chain free. Then
	\begin{equation}\label{a5}
		\sum_{(D_1, \ldots, D_r) \in \mathcal{D}} \frac{1}{\genfrac{[}{]}{0pt}{}{n}{\dim(D_1),\ldots,\dim(D_r)}_{q} \prod_{i<j} q^{\dim(D_i)\dim(D_j)}} \le t^{r-1}.
	\end{equation}
	Consequently, $|\mathcal{D}| \leq m_1 + \cdots + m_{t^{r-1}}$ where $m_1, \ldots, m_{t^{r-1}}$ are the $t^{r-1}$ largest terms of $
	\genfrac{[}{]}{0pt}{}{n}{a_1,\ldots,a_r}_{q} \prod_{i<j} q^{a_i a_j}$ with $a_1 + \cdots + a_r = n$.
\end{itemize}

The continuous analogs of the LYM inequality and Meshalkin's theorem were first introduced by Klain and Rota~\cite{Klain and Rota, KlainRota}.  
Denote by $\mathrm{Mod}(n)$ the subspace lattice of $\mathbb{R}^n$. A family $\mathcal{A} \subseteq \mathrm{Mod}(n)$ is called \emph{$t$-chain free} if $\mathcal{A}$ contains no chain of length $t$ in $\mathrm{Mod}(n)$.
Let $\mathrm{Gr}(n, k)$ denote the Grassmannian of all $k$-dimensional linear subspaces of $\mathbb{R}^n$.  
Notably, $\mathrm{Gr}(n, k)$ forms a compact smooth manifold equipped with an orthogonal invariant measure $\nu_k^n$. The total measure of $\mathrm{Gr}(n,k)$ is denoted by $\genfrac{[}{]}{0pt}{}{n}{k}_{\mathbb{R}}$.
Klain and Rota established the following continuous analog of the LYM inequality:
\begin{itemize}
	\item[] \textbf{Klain–Rota Theorem} \cite{Klain and Rota}.
	For any \( t \)-chain free family \(\mathcal{A}\) of subspaces of \(\mathbb{R}^n\), we have
	\begin{equation}\label{a6}
		\sum_{k=0}^{n} \frac{\nu_{k}^{n}\big(\mathcal{A} \cap \mathrm{Gr}(n,k)\big)}{\genfrac{[}{]}{0pt}{}{n}{k}_{\mathbb{R}}} \le t.
	\end{equation}
\end{itemize} 
An \emph{$r$-decomposition} of $\mathbb{R}^n$ is a tuple $(D_1, \ldots, D_r)$ of nonzero subspaces such that $\mathbb{R}^n = \bigoplus_{i=1}^r D_i$, with $D_i \perp D_j$ for all $i \ne j$.  
Let $\mathcal{D}(\mathbb{R}^n, r)$ denote the set of all $r$-decompositions of $\mathbb{R}^n$.
Given positive integers $a_1, \ldots, a_r$ with $a_1 + \cdots + a_r = n$ and $\mathcal{D} \subseteq \mathcal{D}(\mathbb{R}^n, r)$, define 
$\mathcal{D}_{a_1, \ldots, a_r}$ to be the subset of all $r$-decompositions $(D_1, \ldots, D_r) \in \mathcal{D}$ such that $\dim(D_i) = a_i$ for all $i = 1, \ldots, r$.  
Clearly, we have the disjoint union
\[
\mathcal{D}(\mathbb{R}^n, r) = \bigsqcup_{a_1 + \cdots + a_r = n} 
\mathcal{D}(\mathbb{R}^n, r)_{a_1, \ldots, a_r}.
\]
Each $\mathcal{D}(\mathbb{R}^n, r)_{a_1, \ldots, a_r}$ is a compact smooth manifold equipped with a measure $\nu_{a_1, \ldots, a_r}^n$, which is induced naturally from the measure $\nu_k^n$ on $\mathrm{Gr}(n,k)$.
We denote by $\genfrac{[}{]}{0pt}{}{n}{a_1,\ldots,a_r}_{\mathbb{R}}$ the total measure of $\mathcal{D}(\mathbb{R}^n, r)_{a_1, \ldots, a_r}$ with respect to $\nu_{a_1, \ldots, a_r}^n$.
The measure $\nu_{n;r}$ on $\mathcal{D}(\mathbb{R}^n, r)$ is defined for any $\mathcal{D} \subseteq \mathcal{D}(\mathbb{R}^n, r)$ by
\[
\nu_{n;r}(\mathcal{D}) = \sum_{a_1 + \cdots + a_r = n} \nu_{a_1, \ldots, a_r}^n(\mathcal{D}_{a_1, \ldots, a_r}).
\]
Let $\mathcal{D} \subseteq \mathcal{D}(\mathbb{R}^n, r)$, we define
\[
\mathcal{D}_k := \{D_k \mid (D_1, \ldots,D_k,\ldots, D_r) \in \mathcal{D} \}
\]
for each $k \in \{1,\ldots,r\}$. Klain and Rota also established the following continuous analog of Meshalkin's theorem:
\begin{itemize}
	\item[] \textbf{Klain–Rota Theorem} \cite{Klain and Rota}.
	Let $\mathcal{D} \subseteq \mathcal{D}(\mathbb{R}^n,r)$ be a family such that $\mathcal{D}_k$ is an antichain for each $k \in \{1, \ldots, r\}$. Then
	\begin{equation}\label{a7}
		\sum_{a_1 + \cdots + a_r = n}\dfrac{\nu_{a_1,\ldots,a_r}^n(\mathcal{D}_{a_1,\ldots,a_r})}{\genfrac{[}{]}{0pt}{}{n}{a_1,\ldots,a_r}_{\mathbb{R}}} \le 1.
	\end{equation}
	Consequently, if $n \equiv b \pmod{r}$, then
\[
\nu_{n;r}(\mathcal{D}) \leq {\genfrac{[}{]}{0pt}{}{n}{\underbrace{\lfloor n/r \rfloor, \dots, \lfloor n/r \rfloor}_{r - b},\underbrace{\lfloor n/r \rfloor + 1, \dots, \lfloor n/r \rfloor + 1}_{b}}}_{\raisebox{5mm}{$\mathbb{R}$}}
\]
\end{itemize}

Beyond the $q$-analogs and continuous analogs, one can also consider the arithmetic analogs of the LYM inequality.
Let $n$ be a positive integer, and let $\mathrm{Div}(n)$ denote the lattice of all positive divisors of $n$, ordered by divisibility.  
For a divisor $x \in \mathrm{Div}(n)$, its rank is the total number of prime factors of $x$, counting multiplicities. Let $W_i(n)$ be the number of elements in $\mathrm{Div}(n)$ of rank $i$. Anderson\cite{Ian} proved the following arithmetic analog of the LYM inequality:
\begin{itemize}
	\item[] \textbf{Anderson's Theorem} \cite{Ian}.
	For any antichain $\mathcal{A} \subseteq \mathrm{Div}(n)$, we have
	\begin{equation}\label{a8}
		\sum_{i \ge 0} \frac{|\mathcal{A}_i|}{W_i(n)} \le 1,
	\end{equation}
	where $\mathcal{A}_i$ denotes the elements of $\mathcal{A}$ of rank $i$.
\end{itemize}

\section{Main results}
In all known generalizations of the LYM inequality and its various analogs, the results have been established only for families $\mathcal{D}$ of $r$-decompositions in which every component avoids chains of the same fixed length, i.e., all $\mathcal{D}_k$ are $t$-chain free for a single integer $t$. 

One of the central contributions of this paper is to lift this restriction: we extend these results to the more general and flexible setting where different components of the family $\mathcal{D}$ may avoid chains of 
distinct lengths. Specifically, we allow its $k$-th component family $\mathcal{D}_k$ to be $t_k$-chain free, where $t_1, \ldots, t_r$ are arbitrary given positive integers. We establish such extensions across four fundamental lattice structures: the Boolean lattice $B(n)$, the subspace lattice $L_q(n)$ of $\mathbb{F}_q^n$, the subspace lattice $\mathrm{Mod}(n)$ of $\mathbb{R}^n$, and the divisor lattice $\mathrm{Div}(n)$. We derive two types of results: the corresponding generalized LYM inequalities, and the corresponding upper bounds on the size or measure of the $r$-decomposition family $\mathcal{D}$. These upper bounds rely on a key parameter $\sigma = \frac{t_1 t_2 \cdots t_r}{\mathrm{max}\{t_1, \ldots, t_r\}}$, with the bound taking the form of the sum of the $\sigma$ largest relevant summands. A summary of all these main results is provided in Table \ref{g0}.

\begin{remark}
	It is noted that the value of $\sigma = \frac{t_1 t_2 \cdots t_r}{\mathrm{max}\{t_1, \ldots, t_r\}}$ is independent of the specific value of $\max\{t_1, \ldots, t_r\}$. In other words, the component associated with the longest forbidden chain plays no essential role in the derived LYM inequalities and the corresponding bounds, which remain unchanged no matter how large $\max\{t_1, \ldots, t_r\}$ is.  For $r=2$, this phenomenon admits a straightforward interpretation: by the definition of the $2$-decomposition, if $\mathcal{D}_1$ is $t_1$-chain free, then $\mathcal{D}_2$ is also $t_1$-chain free; consequently, $\mathcal{D}_2$ is trivially $t_2$-chain free when $t_1 \leq t_2$. When $r\geq 3$, however, a clear interpretation of this phenomenon remains elusive.
\end{remark}

\begin{table}[h]
\centering
\renewcommand{\arraystretch}{2.15}
\setlength{\tabcolsep}{11pt}
\begin{tabular}{|c|c|c|}
	\hline
	\textbf{Lattice} & \textbf{LYM Inequality} & \textbf{Upper bound}\\
	\hline
	$B(n)$ & 
	$\displaystyle
	\sum_{a_{1}+\cdots+a_{r}=n}^{} \frac{|\mathcal{D}_{a_{1},\dots,a_{r}}|}{\binom{n}{{a_1, \ldots, a_r}} } \leq \sigma$ & $|\mathcal{D}| \leq  \sum\binom{n}{a_1, \ldots, a_r}$\\[10pt]
	
	\hline
	$L_q(n)$& 
	$\displaystyle 
	\;\sum_{a_1 + \cdots + a_r = n}\frac{|\mathcal{D}_{a_{1},\dots,a_{r}}|}{\genfrac{[}{]}{0pt}{}{n}{a_1,\ldots,a_r}_{q}\prod\limits_{i<j}q^{a_ia_j}}\le \sigma\;\;
	$ & \;\;$|\mathcal{D}| \leq \sum \genfrac{[}{]}{0pt}{}{n}{a_1,\ldots,a_r}_{q}\prod\limits_{i<j}q^{a_ia_j}$\;\\[15pt]
	\hline
	$\mathrm{Mod}(n)$ & 
	$\displaystyle 
	\sum_{a_{1}+\cdots+a_{r}=n}\dfrac{\nu_{a_1,\ldots,a_r}^n(\mathcal{D}_{a_1,\ldots,a_r})}{\genfrac{[}{]}{0pt}{}{n}{a_1,\ldots,a_r}_{\mathbb{R}}}\le \sigma$ & $\nu_{n;r}(\mathcal{D}) \leq \sum\genfrac{[}{]}{0pt}{}{n}{a_1,\ldots,a_r}_{\mathbb{R}}$\\[11pt]
	
	\hline
	$\mathrm{Div}(n)$ & 
	$\displaystyle 
	\sum_{a_{1}+\cdots+a_{r}=\rk(n)}^{} \frac{|\mathcal{D}_{a_{1},\dots,a_{r}}|}{N_{a_1, \ldots,a_r}(n)} \leq \sigma$ & $ |\mathcal{D}| \leq \sum N_{a_1, \ldots,a_r}(n)$\\[10pt]
	\hline
\end{tabular}
		\caption{Generalizations of LYM inequality in classical lattices}
		\label{g0}
\end{table}

In addition to extending results for $r$-decompositions, we further generalize our framework to a closely related structure: 
$r$-multichains. Formally, an $r$-multichain of a lattice is a tuple $(C_1, \ldots, C_r)$ of elements of the lattice such that $C_1 \leq C_2 \leq \cdots \leq C_r$. For a family $\mathcal{C}$ of $r$-multichains, we define $
\mathcal{C}_k = \{C_k \mid (C_1, \ldots, C_k, \ldots, C_r) \in \mathcal{C}\}$
for each $k\in \{1, \ldots, r\}$, and
\[
\mathcal{C}_{a_1,\dots,a_r} = \{(C_1, \ldots, C_r) \in \mathcal{C} \mid \rk(C_i) = a_1 + \cdots + a_i \text{ for all } i = 1, \ldots, r\},
\]
where $\rk(\cdot)$ denotes the rank function of this lattice, and $a_1,\ldots,a_r$ are non-negative integers.

Following the same research paradigm as our $r$-decomposition results, another key central contribution of this paper is to establish two types of results for $r$-multichains $\mathcal{C}$ such that each $\mathcal{C}_k$ is $t_k$-chain free for arbitrary positive integers $t_1,\ldots,t_r$: the generalized LYM inequalities, and the corresponding upper bounds on the size or measure of $\mathcal{C}$. A summary of these results is provided in Table \ref{tab:g2}, where $\tau$ denotes the product $t_1 t_2 \cdots t_r$. In the last column, each summation is taken over the $\tau$ largest terms among all summands.
\begin{table}[H]
	\centering
	\renewcommand{\arraystretch}{2.15}
	\setlength{\tabcolsep}{12.3pt}
	\begin{tabular}{|c|c|c|}
		\hline
		\textbf{Lattices} & \textbf{LYM Inequality} & \textbf{Upper bound}\\
		\hline
		
		\textbf{$B(n)$} & 
		$\displaystyle 
		\sum_{a_1 + \cdots + a_{r+1} = n } \frac{|\mathcal{C}_{a_{1},\dots,a_{r}}|}{\binom{n}{{a_1,\ldots,a_{r+1}}} }\leq \tau
		$ & $|\mathcal{C}| \leq  \sum\binom{n}{a_1, \ldots, a_{r+1}}$\\[10pt]
		
		\hline
		\textbf{$L_q(n)$} & 
		$\displaystyle 
		\sum_{a_1 + \cdots + a_{r+1} = n}\frac{|\mathcal{C}_{a_{1},\dots,a_{r}}|}{\genfrac{[}{]}{0pt}{}{n}{a_1,\ldots,a_{r+1}}_{q}}\le \tau
		$ & $|\mathcal{C}| \leq \sum \genfrac{[}{]}{0pt}{}{n}{a_1,\ldots,a_{r+1}}_{q}$\\[10pt]
		
		\hline
		\textbf{$\mathrm{Mod}(n)$} & 
		$\;\;\displaystyle 
		\sum_{a_{1}+\cdots+a_{r+1}=n}\dfrac{\mu_{a_1,\ldots,a_r}^n(\mathcal{C}_{a_1,\ldots,a_{r}})}{\genfrac{[}{]}{0pt}{}{n}{a_1,\ldots,a_{r+1}}_{\mathbb{R}}}\le \tau
		$ \;\;&\;$\mu_{n;r}(\mathcal{C}) \leq \sum\genfrac{[}{]}{0pt}{}{n}{a_1,\ldots,a_{r+1}}_{\mathbb{R}}$\;\\[10pt]
		
		\hline
		\textbf{$\mathrm{Div}(n)$} & 
		$\displaystyle 
		\sum_{a_{1}+\cdots+a_{r+1}=\rk(n)}^{} \frac{|\mathcal{C}_{a_{1},\dots,a_{r}}|}{N_{a_1,\ldots,a_{r+1}}(n)} \leq \tau$ & $ |\mathcal{C}| \leq \sum N_{a_1, \ldots,a_{r+1}}(n)$ \\[10pt]
		
		\hline
	\end{tabular}
	\caption{LYM inequalities for $r$-multichains in classical lattices}
	\label{tab:g2}
\end{table}

\section{Generalizations of the set-theoretic LYM inequality}\label{sec2}

Recall that an \emph{$r$-decomposition} of $[n]$ is a tuple $\left ( D_{1},\dots ,D_{r} \right )$ of subsets of $[n]$ where $\bigcup_{i=1}^r D_i = [n]$ and $D_i \cap D_j = \emptyset$ for all $i \neq j$. Given a family $\mathcal{D}$ of $r$-decompositions of $[n]$ and non-negative integers $a_1,\ldots,a_r$ with $a_1 + \cdots + a_r =n$, we write 
\[
\mathcal{D}_k = \{D_k \mid (D_1, \ldots, D_k, \ldots, D_r) \in \mathcal{D} \}\]
for each $k \in \{1, \ldots, r\}$, and 
\[\mathcal{D}_{a_{1},\dots,a_{r}}=\{(D_{1},\dots ,D_{r}) \in \mathcal{D} \mid |D_i| = a_i,\; i=1,\ldots,r\}.\]

\begin{Theorem}\label{Dec}
	Let $t_1, \ldots, t_r$ be positive integers and $\sigma = \frac{t_1t_2\cdots t_r}{\mathrm{max}\{t_1, \ldots, t_r\}}$. Suppose $\mathcal{D}$ is a family of $r$-decompositions of $[n]$ such that $\mathcal{D}_k$ is $t_k$-chain free for each $k \in \{1, \ldots, r\}$. Then
	\begin{equation}\label{1.1}
		\sum_{a_{1}+\cdots+a_{r}=n}^{} \frac{|\mathcal{D}_{a_{1},\dots,a_{r}}|}{\binom{n}{{a_1, \ldots, a_r}} } \leq \sigma.
	\end{equation}
	Consequently, $$|\mathcal{D}| \leq m_1+\cdots+m_\sigma,$$
	where $m_1,\ldots,m_\sigma$ are the $\sigma$ largest $\binom{n}{{a_1, \ldots, a_r}}$ for non-negative integers $a_1, \ldots, a_r$ with $a_1+\cdots+a_r = n$.
\end{Theorem}

Before moving to the proof of Theorem \ref{Dec}, we state a crucial lemma that plays a key role in our proof.


\begin{Lemma}\label{d}
	Suppose that $c_{1} \ge \cdots \ge c_{n} > 0$. If $c_{i}\ge x_{i}\ge 0$  for $i \in \{1, \ldots, n\}$, and if
	\begin{equation}\label{lem}
		x_{1} + \cdots + x_{n} > c_{1}+ \cdots +c_{t},
	\end{equation}
	then
	\begin{equation*}
		\sum_{k=1}^{n}\frac{x_{k}}{c_{k}} > t.
	\end{equation*}
\end{Lemma}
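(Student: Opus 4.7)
The plan is to prove this by a rearrangement / exchange argument. Since $c_1 \geq c_2 \geq \cdots \geq c_n > 0$, the coefficients $1/c_k$ form a nondecreasing sequence, so among all admissible allocations $(x_1,\ldots,x_n)$ with fixed total $S = x_1 + \cdots + x_n$, the sum $\sum_{k=1}^n x_k/c_k$ is minimized when the $x$-mass is pushed to the smallest available indices. I would formalize this by a single exchange step: if $x_i < c_i$ and $x_j > 0$ for some $i < j$, then transferring a small amount $\varepsilon > 0$ from $x_j$ to $x_i$ keeps the allocation feasible and changes the sum by $\varepsilon(1/c_i - 1/c_j) \leq 0$, so iterating such transfers can only decrease $\sum_{k=1}^n x_k/c_k$.

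The resulting ``greedy'' minimizer is explicit: letting $m$ be the largest index with $c_1 + \cdots + c_m \leq S$, set $x_k^* = c_k$ for $k \leq m$, $x_{m+1}^* = S - (c_1 + \cdots + c_m)$ when $m < n$, and $x_k^* = 0$ for $k \geq m+2$. The hypothesis $S > c_1 + \cdots + c_t$, combined with the constraint $S \leq c_1 + \cdots + c_n$, forces $t < n$ and $m \geq t$, so $c_{t+1}$ is defined and strictly positive.

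It then suffices to compare the greedy value with $t$. If $m \geq t+1$, the greedy value is already at least $m \geq t+1 > t$. If $m = t$, the residual $S - (c_1 + \cdots + c_t)$ is strictly positive by hypothesis while $c_{t+1} > 0$, so the greedy value equals $t + (S - c_1 - \cdots - c_t)/c_{t+1} > t$. In either case the minimum over admissible allocations exceeds $t$, so the original $(x_1,\ldots,x_n)$ also satisfies $\sum_{k=1}^n x_k/c_k > t$.

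There is no substantial obstacle, only bookkeeping. The main points to track are the strict positivity of $c_{t+1}$, which is forced by the inequality $S > c_1 + \cdots + c_t$ rather than by any specific feature of the $x_k$, and the case split $m = t$ versus $m > t$, where the strictness of the final inequality is supplied from different sources (nonempty residual at the boundary versus an extra filled slot in the interior).
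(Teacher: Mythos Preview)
Your argument is correct but proceeds quite differently from the paper. The paper gives a three-line algebraic proof with no optimization at all: setting $y_i = c_i - x_i$, it rewrites
\[
\sum_{k=1}^{n}\frac{x_k}{c_k} \;=\; t - \sum_{k=1}^{t}\frac{y_k}{c_k} + \sum_{k=t+1}^{n}\frac{x_k}{c_k},
\]
and then bounds both tail sums against the single pivot $c_{t+1}$ (using $c_k \ge c_{t+1}$ for $k\le t$ and $c_k \le c_{t+1}$ for $k\ge t+1$), so that the hypothesis, rewritten as $\sum_{k\le t} y_k < \sum_{k>t} x_k$, yields the strict inequality immediately. Your exchange/greedy route is more structural: it identifies the extremal allocation explicitly and reads off the bound, at the cost of a compactness or finite-termination remark to justify that the greedy configuration really is the minimizer, plus a short case split $m=t$ versus $m\ge t+1$. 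The paper's version is shorter and needs no cases; yours makes the extremal configuration visible, which is informative if one later wants to understand when the inequality is close to sharp.
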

\begin{proof}
	Let $y_i=c_i-x_i$ for each $i\in \{1, \ldots, n\}$. Then the inequality \eqref{lem} reduces to
	\begin{equation*}
		x_{t+1}+\cdots+x_n>y_1+\cdots+y_{t}.
	\end{equation*}
	Therefore, we have
	\begin{equation*}
		\begin{aligned}
			\sum_{k=1}^{n}\frac{x_{k}}{c_{k}}
			&=t-\sum_{k=1}^{t}\frac{y_k}{c_{k}}+\sum_{k=t+1}^{n}\frac{x_{k}}{c_{k}}\\
			&\geq t+\sum_{k=t+1}^{n}\frac{x_{k}}{c_{k}}-\frac{y_1+\cdots+y_{t}}{c_{t+1}}\\
			&> t+\sum_{k=t+1}^{n}\frac{x_{k}}{c_{k}}-\frac{x_{t+1}+\cdots+x_n}{c_{t+1}}\\
			&\geq t.
		\end{aligned}
	\end{equation*}
	
\end{proof}

\begin{proof}[Proof of Theorem \ref{Dec}]
 	Without loss of generality, assume that $t_r$ is the maximal integer among $\{t_1, \cdots, t_r\}$. We proceed by induction on $r$ to prove the inequality \eqref{1.1}. For $r =2$, note that any $2$-decomposition $(D_1, D_2)$ of $[n]$ satisfies $D_2 = [n]\setminus D_1$. Then, if $\mathcal{D}_1$ is $t_1$-chain free, it follows that $\mathcal{D}_2$ is also $t_1$-chain free, and hence $t_2$-chain free. Therefore, the inequality \eqref{1.1} is equivalent to \eqref{b} in Rota-Harper Theorem by choosing the collection $\A$ to be $\mathcal{D}_1$.
	Now suppose that $r>2$ and that the inequality \eqref{1.1} holds for $r-1$. Then
	\begin{equation}\label{c1}
		\begin{aligned}
			\sum_{a_{1}+\cdots+a_{r}=n}^{} \frac{|\mathcal{D}_{a_{1},\dots,a_{r}}|}{\binom{n}{{a_1,\ldots,a_r}} }&=\sum_{(D_1,\ldots, D_r)\in \mathcal{D} }^{} \frac{1}{\binom{n}{{|D_1|,\ldots,|D_r|}} }\\
			&=\sum_{(D_1,\ldots, D_r)\in \mathcal{D} }^{} \frac{1}{\binom{n}{|D_1|}}\frac{1}{\binom{n-|D_1|}{{|D_2|,\ldots,|D_r|}} } \\
			&=\sum_{A\in \mathcal{D}_1}\frac{1}{\binom{n}{|A|}}\sum\limits_{\substack{{ (D_1,\ldots, D_r)\in \mathcal{D}}\\{D_{1} = A}}}\frac{1}{\binom{n-|A|}{{|D_2|,\ldots,|D_r|}}}.
		\end{aligned}
	\end{equation}
	It follows from the induction hypothesis that
	\[
	\sum\limits_{\substack{{ (D_1,\ldots, D_r)\in \mathcal{D}}\\{D_{1} = A}}}\frac{1}{\binom{n-|A|}{{|D_2|,\ldots,|D_r|}}} \leq t_2\cdots t_{r-1}  .
	\]
	Therefore, the last expression in \eqref{c1} is at most $\sigma = t_1t_2\cdots t_{r-1}$ by \eqref{b}, which completes the proof of \eqref{1.1}.
	
	The number of multinomial coefficients $\binom{n}{a_{1},\ldots,a_{r}}$ that satisfy $a_1+\cdots + a_r=n$ is $\binom{n+r-1}{r-1}$. Note that
	\[
	\mathcal{D} = \bigsqcup_{a_{1}+\cdots+a_{r}=n}\mathcal{D}_{a_{1},\dots,a_{r}}
	\]
	and $0\le|\mathcal{D}_{a_{1},\dots,a_{r}}|\le \binom{n}{{a_1,\ldots,a_r}}$ for each $\mathcal{D}_{a_{1},\dots,a_{r}}$.  When $\sigma > \binom{n+r-1}{r-1}$, we assume that $m_i = 0$ for all $i > \binom{n+r-1}{r-1}$ and then $|\mathcal{D}| \leq m_1+\cdots+m_\sigma$ trivially holds. For the case $\sigma \leq \binom{n+r-1}{r-1}$, suppose for contradiction that $|\mathcal{D}| > m_1+\cdots+m_\sigma$.
	Then by Lemma \ref{d} we have
	\begin{equation*}
	  	\sum_{a_{1}+\cdots+a_{r}=n}^{} \frac{|\mathcal{D}_{a_{1},\dots,a_{r}}|}{\binom{n}{{a_1,\ldots,a_r}} } > \sigma,
	\end{equation*}
	which contradicts the inequality \eqref{1.1}. This completes the proof.
\end{proof}

\begin{remark}
	The upper bound given in Theorem \ref{Dec} is not tight in general. Consider the case where $n=4$ and $r = 3$. When $t_1 = 1$ and $t_2 = t_3 = 2$, Theorem \ref{Dec} provides the upper bound $$\binom{4}{1,1,2} + \binom{4}{1,2,1} = 24$$ on the size of $\mathcal{D}$. This bound can be attained by the following collection:
\begin{equation*}
	 \left\{
	\begin{array}{llll}
		(\{1\}, \{2\}, \{3,4\}), & (\{2\}, \{1\}, \{3,4\}), & (\{3\}, \{1\}, \{2,4\}), & (\{4\}, \{1\}, \{2,3\}), \\
		(\{1\}, \{3\}, \{2,4\}), & (\{2\}, \{3\}, \{1,4\}), & (\{3\}, \{2\}, \{1,4\}), & (\{4\}, \{2\}, \{1,3\}), \\
		(\{1\}, \{4\}, \{2,3\}), & (\{2\}, \{4\}, \{1,3\}), & (\{3\}, \{4\}, \{1,2\}), & (\{4\}, \{3\}, \{1,2\}), \\
		(\{1\}, \{3,4\}, \{2\}), & (\{2\}, \{3,4\}, \{1\}), & (\{3\}, \{2,4\}, \{1\}), & (\{4\}, \{2,3\}, \{1\}), \\
		(\{1\}, \{2,4\}, \{3\}), & (\{2\}, \{1,4\}, \{3\}), & (\{3\}, \{1,4\}, \{2\}), & (\{4\}, \{1,3\}, \{2\}), \\
		(\{1\}, \{2,3\}, \{4\}), & (\{2\}, \{1,3\}, \{4\}), & (\{3\}, \{1,2\}, \{4\}), & (\{4\}, \{1,2\}, \{3\})
	\end{array}
	\right\}.
\end{equation*}
	However, when $t_1 = 1$ and $t_2 = t_3 = 3$, a computational enumeration shows that the maximum size of $\mathcal{D}$ is 28, which is strictly smaller than the upper bound $$\binom{4}{2,1,1} + \binom{4}{1,2,1} + \binom{4}{1,1,2} = 36$$ given by Theorem \ref{Dec}. 
\end{remark}

Next, we turn to establishing the LYM inequality for $r$-multichains.
An $r$-multichain in $B(n)$ is an $r$-tuple $(C_1, \ldots, C_r)$ of elements of $B(n)$ such that $C_1 \subseteq C_2 \subseteq \cdots \subseteq C_r$.
Let $\mathcal{C}$ be a family of $r$-multichains, and let $a_1, \ldots, a_r$ be non-negative integers with $a_1 + \cdots + a_r \leq n$. Define $\mathcal{C}_k = \{C_k \mid (C_1, \ldots, C_k, \ldots, C_r) \in \mathcal{C}\}$ for each $k \in \{1, \ldots, r\}$, and
\[
\mathcal{C}_{a_1,\dots,a_r} = \{(C_1, \ldots, C_r) \in \mathcal{C} \mid |C_i| = a_1 + \cdots + a_i \text{ for all } i = 1, \ldots, r\}.
\]
\begin{Theorem}\label{Ch}
	Let $t_1, \ldots, t_r$ be positive integers and $\tau = t_1t_2\cdots t_r$. Suppose $\mathcal{C}$ is a family of $r$-multichains in $B(n)$ such that $\mathcal{C}_k$ is $t_k$-chain free for each $k \in \{1, \ldots, r\}$. Then
	\begin{equation}\label{1.2}
		\sum_{a_1 + \cdots + a_{r+1} = n } \frac{|\mathcal{C}_{a_{1},\dots,a_{r}}|}{\binom{n}{{a_1,\ldots,a_{r+1}}} }\leq \tau.
	\end{equation}
	Consequently,
	\[
	|\mathcal{C}| \leq m_1 + \cdots + m_\tau,
	\]
	where $m_1,\ldots,m_\tau$ are the $\tau$ largest $\binom{n}{{a_1, \ldots, a_{r+1}}}$ for non-negative integers $a_1, \ldots, a_r$ with $a_1+\cdots+a_{r+1} = n$.
\end{Theorem}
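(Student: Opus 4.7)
The plan is to prove the inequality \eqref{1.2} by induction on $r$, leveraging the Rota-Harper inequality \eqref{b} at the top element of the multichain and the induction hypothesis for the shorter multichain underneath. The key combinatorial identity is the factorization
\begin{equation*}
\binom{n}{a_1,\ldots,a_{r+1}} = \binom{n}{a_1+\cdots+a_r}\cdot\binom{a_1+\cdots+a_r}{a_1,\ldots,a_r},
\end{equation*}
in which, for a multichain $(C_1,\ldots,C_r)$, the first factor encodes the position of the top element $C_r$ (since $a_1+\cdots+a_r=|C_r|$ and $a_{r+1}=n-|C_r|$), while the second factor encodes the inner $(r-1)$-multichain $(C_1,\ldots,C_{r-1})$ sitting inside $C_r$.

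For the base case $r=1$, a $1$-multichain is a single set, and since $\binom{n}{a_1,a_2}=\binom{n}{a_1}$ when $a_1+a_2=n$, the inequality \eqref{1.2} reduces directly to the Rota-Harper inequality \eqref{b} applied to the $t_1$-chain free family $\mathcal{C}_1$. For the inductive step from $r-1$ to $r$, I would group terms of the LYM sum according to the value of $C_r$, obtaining
\begin{equation*}
\sum_{(C_1,\ldots,C_r)\in\mathcal{C}} \frac{1}{\binom{n}{a_1,\ldots,a_{r+1}}}
= \sum_{A\in\mathcal{C}_r} \frac{1}{\binom{n}{|A|}} \sum_{\substack{(C_1,\ldots,C_{r-1},A)\in\mathcal{C}}} \frac{1}{\binom{|A|}{a_1,\ldots,a_r}}.
\end{equation*}
For each fixed $A\in\mathcal{C}_r$, the tuples $(C_1,\ldots,C_{r-1})$ with $(C_1,\ldots,C_{r-1},A)\in\mathcal{C}$ form a family $\mathcal{C}^{(A)}$ of $(r-1)$-multichains in $B(A)\cong B(|A|)$; its $k$-th component family is a subfamily of $\mathcal{C}_k$ and therefore $t_k$-chain free. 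Applying the induction hypothesis to $\mathcal{C}^{(A)}$ bounds the inner sum by $t_1\cdots t_{r-1}$, while applying the Rota-Harper inequality \eqref{b} to the $t_r$-chain free family $\mathcal{C}_r$ bounds the outer sum by $t_r$, yielding the claimed bound $\tau=t_1\cdots t_r$.

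The consequence $|\mathcal{C}|\le m_1+\cdots+m_\tau$ then follows by exactly the argument used at the end of the proof of Theorem \ref{Dec}: either $\tau$ exceeds the number of valid index tuples $(a_1,\ldots,a_{r+1})$ summing to $n$ (in which case the bound is vacuous once we set $m_i=0$ for large $i$), or, assuming the contrary, Lemma \ref{d} applied with $x_i=|\mathcal{C}_{a_1,\ldots,a_r}|$ and $c_i=\binom{n}{a_1,\ldots,a_{r+1}}$ forces the LYM sum to exceed $\tau$, contradicting \eqref{1.2}. I do not anticipate a substantive obstacle; the only real subtlety is the bookkeeping verification that restricting $\mathcal{C}$ to the fibre above a fixed $A\in\mathcal{C}_r$ preserves every $t_k$-chain free hypothesis for $k<r$, which is immediate since the restricted component families are subfamilies of the original ones. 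Notice that, unlike in Theorem \ref{Dec}, here no factor is absorbed by the duality between complementary parts of a decomposition, which is precisely why the bound is the full product $\tau=t_1\cdots t_r$ rather than $\sigma=\tau/\max\{t_1,\ldots,t_r\}$.
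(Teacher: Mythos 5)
Your proof is correct and follows essentially the same route as the paper: induction on $r$, factoring the multinomial coefficient to peel off one component, bounding the fibre by the induction hypothesis and the remaining single-coordinate sum by the Rota--Harper inequality \eqref{b}, and then deducing the cardinality bound via Lemma \ref{d}. The only (immaterial) difference is that you condition on the top element $C_r$ while the paper conditions on the bottom element $C_1$; both factorizations work identically.
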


\begin{remark}\label{remk2}
	Denote by $\mathcal{D}([n], r)$ and $\mathcal{C}([n],r)$ the set of all $r$-decompositions and $r$-multichains of $[n]$, respectively.
	It can be easy to verify that the map $\phi : \mathcal{D}([n], r) \to \mathcal{C}([n],r-1)$, defined by
	$$
	\phi(D_1, \ldots, D_r) = (D_1, D_1 \cup D_2, \ldots, D_{1} \cup \cdots \cup D_{r-1}),
	$$
	is a bijection. However, the image $\phi(\mathcal{D})$ of the family $\mathcal{D}$ appearing in Theorem \ref{Dec} does not necessarily satisfy the condition that each $\phi(\mathcal{D})_k$ is also $t_k$-chain-free. A concrete counterexample is given as follows: Let $n = 4$ and $r= 3$. Consider the collection $\mathcal{D} = \{(\{1\},\{3\},\{2,4\}),(\{2,3\},\{1\},\{4\})\}$. Then $\phi(\mathcal{D}) = \{(\{1\},\{1,3\}),(\{2,3\},\{1,2,3\})\}$. We observe that $\mathcal{D}_2$ is $1$-chain free, while $\phi(\mathcal{D})_2$ fails to preserve the $1$-chain free property. Therefore, Theorem \ref{Ch} cannot be deduced from Theorem \ref{Dec} via this bijection. 
\end{remark}

\begin{proof}[Proof of theorem \ref{Ch}]
	 We proceed by induction on $r$ to prove the inequality \eqref{1.2}. For $r=1$, \eqref{1.2} is actually the generalized LYM inequality \eqref{b}. Suppose that $r>1$ and that the inequality \eqref{1.2} holds for $r-1$. Then
	\begin{equation*}
		\begin{aligned}
			\sum_{a_1 + \cdots + a_{r+1} = n } \frac{|\mathcal{C}_{a_{1},\dots,a_{r}}|}{\binom{n}{{a_1,\ldots,a_{r+1}}} }
			&=\sum_{(C_1,  \ldots, C_r) \in \mathcal{C} } \frac{1}{\binom{n}{{|C_1|,|C_2-C_1|,\ldots,|C_r-C_{r-1}|,|[n]-C_r|}}}\\
			&=\sum_{(C_1,  \ldots, C_r)\in \mathcal{C} } \frac{1}{\binom{n}{|C_1|}}\frac{1}{\binom{n-|C_1|}{{|C_2-C_1|,\ldots,|C_r-C_{r-1}|,|[n]-C_r|}} } \\
			&=\sum_{{A\in \mathcal{C}_1}}^{}\frac{1}{\binom{n}{|A|}}\sum\limits_{\substack{{ (C_1,  \ldots, C_r)\in \mathcal{C}}\\{C_{1} = A}}}\frac{1}{\binom{n-|A|}{{|C_2-C_1|,\ldots,|C_r-C_{r-1}|,|[n]-C_r|}}}.
		\end{aligned}
	\end{equation*}
	It follows from the induction hypothesis and \eqref{b} that the last expression is at most $\tau=t_1t_2\cdots t_r$.
	
The number of multinomial coefficients $\binom{n}{a_{1},\ldots,a_{r+1}}$ that satisfy $a_1+\cdots + a_{r+1}=n$ is $\binom{n+r}{r}$. Note that
\[
\mathcal{C} = \bigsqcup_{a_{1}+\cdots+a_{r+1}=n}\mathcal{C}_{a_{1},\dots,a_{r}}
\]
and $0\le|\mathcal{C}_{a_{1},\dots,a_{r}}|\le \binom{n}{{a_1,\ldots,a_{r+1}}}$ for each $\mathcal{C}_{a_{1},\dots,a_{r}}$. The inequality $|\mathcal{C}| \leq m_1 + \cdots + m_\tau$ trivially holds when $\tau > \binom{n+r}{r}$, assuming that $m_i = 0$ for all $i > \binom{n+r}{r}$. For the case $\tau \leq \binom{n+r}{r}$, suppose for contradiction that $|\mathcal{C}| > m_1 + \cdots + m_\tau$.
Then, by Lemma \ref{d} we have
\begin{equation*}
	\sum_{a_{1}+\cdots+a_{r+1}=n} \frac{|\mathcal{C}_{a_{1},\dots,a_{r}}|}{\binom{n}{{a_1,\ldots,a_{r+1}}} } > \tau,
\end{equation*}
which contradicts the inequality \eqref{1.2}. This completes the proof.
\end{proof}

\section{Generalizations of the $q$-analog of the LYM inequality}\label{sec3}
Let $\mathbb{F}_q$ be a finite field with $q$ elements, and let $L_q(n)=L(\mathbb{F}_q^n)$ denote the subspace lattice of $\mathbb{F}_q^n$. Within this lattice, it is known that any $k$-dimensional subspace has exactly $q^{k(n-k)}$ complements; see \cite[Lemma 8]{Beck and Zaslavsky2}.
Notably, the number of $k$-dimensional subspaces in $L_q(n)$ is captured by the $q$-analog of the binomial coefficients, which is called the $q$-Gaussian coefficients and defined as
\begin{equation*}
	\begin{bmatrix}n\\k\end{bmatrix}_{q}=\frac{[n]_{q}!}{[k]_q![n-k]_q!},\;\;\text{where}\;\;[n]_{q}!=(q^n-1)(q^{n-1}-1)\cdots(q-1).
\end{equation*}
Generalizing this notion, consider non-negative integers $a_1, a_2, \ldots , a_r$ such that $a_1 + \cdots + a_r = n$. The $q$-analog of the multinomial coefficients is
\begin{equation}\label{q-Gaussian}
	\genfrac{[}{]}{0pt}{}{n}{a_1,\ldots,a_r}_{q}=\frac{[n]_q!}{[a_1]_{q}!\cdots [a_r]_{q}!}.
\end{equation}
Recall that an $r$-decomposition of $\mathbb{F}_q^n$ is a tuple $(D_1,\ldots,D_r)$ of subspaces such that $D_1 \oplus \cdots \oplus D_r = \mathbb{F}_q^n$. 
Given a family $\mathcal{D}$ of $r$-decompositions of $\mathbb{F}_q^n$, we write 
\[
\mathcal{D}_k=\left\{D_{k}\mid (D_{1},\ldots,D_k, \ldots, D_{r})\in\mathcal{D}\right\}
\]
for each $k \in \{1, \ldots, r\}$, and
\[
	\mathcal{D}_{a_{1},\dots,a_{r}}= \{(D_1, \ldots, D_r)\in \mathcal{D}\mid \dim(D_i) = a_i\text{ for all } i=1,\ldots, r\}.
\]
It follows from \cite[Lemma 9]{Beck and Zaslavsky2} that when $\mathcal{D}$ is the set of all $r$-decompositions, we have
\begin{equation*}\label{11}
	|\mathcal{D}_{a_1, \ldots, a_r}| ={\textstyle \genfrac{[}{]}{0pt}{}{n}{a_1,\ldots,a_r}_{q}}\prod\limits_{i<j}q^{a_ia_j} .
\end{equation*}

\begin{Theorem}\label{Decq}
	Let  $t_1, \ldots, t_r$ be positive integers and $\sigma = \frac{t_1t_2\cdots t_r}{\mathrm{max}\{t_1, \ldots, t_r\}}$. Suppose $\mathcal{D}$ is a family of $r$-decompositions of $\mathbb{F}_q^n$ such that $\mathcal{D}_k$ is \emph{$t_k$-chain free} for each $k\in \{1, \ldots, r\}$. Then
	\begin{equation}\label{4.1}
		\sum_{a_1 + \cdots + a_r = n}\frac{|\mathcal{D}_{a_{1},\dots,a_{r}}|}{\genfrac{[}{]}{0pt}{}{n}{a_1,\ldots,a_r}_{q}\prod\limits_{i<j}q^{a_ia_j}}\le \sigma.
	\end{equation}
	Consequently,
	\begin{equation*}
		|\mathcal{D}|\le m_1+\cdots+m_\sigma,
	\end{equation*}
	where $m_1,\ldots,m_\sigma$ are the $\sigma$ largest $\genfrac{[}{]}{0pt}{}{n}{a_1,\ldots,a_r}_{q}\prod\limits_{i<j}q^{a_ia_j}$ for non-negative integers $a_1,\ldots,a_r$ with $a_1+\cdots + a_r=n$.
\end{Theorem}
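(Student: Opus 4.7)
The plan is to mirror the inductive structure of the proof of Theorem \ref{Dec}, adapting it to the $q$-analog setting. Without loss of generality I assume $t_r = \max\{t_1, \ldots, t_r\}$, so that $\sigma = t_1 t_2 \cdots t_{r-1}$, and proceed by induction on $r$.

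For the base case $r = 2$, the key observation is that every $A \in \mathcal{D}_1$ with $\dim A = a$ has exactly $q^{a(n-a)}$ complements in $\mathbb{F}_q^n$, so $|\mathcal{D}_{a, n-a}| \le q^{a(n-a)} \cdot |\{A \in \mathcal{D}_1 : \dim A = a\}|$. Substituting this bound into the LHS of \eqref{4.1} and using the identity $\genfrac{[}{]}{0pt}{}{n}{a, n-a}_{q} = \genfrac{[}{]}{0pt}{}{n}{a}_{q}$, the expression simplifies to $\sum_{A \in \mathcal{D}_1} \frac{1}{\genfrac{[}{]}{0pt}{}{n}{\dim A}_{q}}$, which is at most $t_1 = \sigma$ by the $q$-analog Rota--Harper inequality \eqref{a4} applied to the $t_1$-chain free family $\mathcal{D}_1$.

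For the inductive step, I will use the factorization
\[
\genfrac{[}{]}{0pt}{}{n}{a_1, \ldots, a_r}_{q} \prod_{i<j} q^{a_i a_j} = \genfrac{[}{]}{0pt}{}{n}{a_1}_{q} \cdot q^{a_1(n-a_1)} \cdot \genfrac{[}{]}{0pt}{}{n-a_1}{a_2, \ldots, a_r}_{q} \cdot \prod_{2 \le i < j} q^{a_i a_j},
\]
which uses $a_1(a_2 + \cdots + a_r) = a_1(n - a_1)$. I split the LHS of \eqref{4.1} by grouping terms with common $D_1 = A$, and for each fixed $A$ I further partition the inner sum according to the complement $C := D_2 \oplus \cdots \oplus D_r$ of $A$ in $\mathbb{F}_q^n$. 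For each such $C$, the tuples $(D_2, \ldots, D_r)$ form $(r-1)$-decompositions of $C$, and their $k$-th coordinate families (for $k \ge 2$) are subsets of $\mathcal{D}_k$ and hence remain $t_k$-chain free inside the sublattice $L(C) \cong L_q(n - \dim A)$, since any chain in $L(C)$ is a chain in $L_q(n)$. Applying the induction hypothesis to each such subfamily gives a bound of $t_2 \cdots t_{r-1}$ per complement $C$; summing over the $q^{\dim A (n - \dim A)}$ complements of $A$ then cancels exactly the $q^{\dim A (n - \dim A)}$ factor in the denominator supplied by the factorization. A final application of Rota--Harper to $\mathcal{D}_1$ delivers the overall bound $t_1 \cdot t_2 \cdots t_{r-1} = \sigma$.

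The upper bound on $|\mathcal{D}|$ follows via the same contradiction argument used at the end of the proof of Theorem \ref{Dec}: invoke Lemma \ref{d} with the weights $c_i$ now being $\genfrac{[}{]}{0pt}{}{n}{a_1, \ldots, a_r}_{q} \prod_{i<j} q^{a_i a_j}$ listed in decreasing order. The main obstacle is the $q$-analog subtlety that fixing $D_1 = A$ does not confine the remaining $D_i$ to a single subspace---they may sum to any complement of $A$. The resolution is to partition the inner sum by this complement and observe that the number of complements $q^{\dim A (n - \dim A)}$ matches exactly the extra $q$-factor produced by the factorization of the $q$-multinomial weight, so the induction closes cleanly.
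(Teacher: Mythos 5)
Your proposal is correct and follows essentially the same route as the paper: the same WLOG reduction putting $\max\{t_i\}$ last, the same factorization of the weight $\genfrac{[}{]}{0pt}{}{n}{a_1,\ldots,a_r}_{q}\prod_{i<j}q^{a_ia_j}$ through the first component, the same partition of the inner sum over the $q^{\dim A(n-\dim A)}$ complements of $A$ so that this count cancels the extra $q$-power, and the same final appeals to the Rota--Harper inequality \eqref{a4} and to Lemma \ref{d}. The only cosmetic difference is that you make the "$t_k$-chain free restricts to sublattices" point explicit, which the paper leaves implicit.
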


\begin{proof}[Proof of theorem \ref{Decq}]
	Without loss of generality, assume $t_{r}=\mathrm{max}\{t_1, \ldots, t_r\}$.
	We proceed by induction on $r$. For $r = 2$, the inequality \eqref{4.1} reduces to
	\begin{equation}\label{d1}
		\begin{aligned}
			\sum_{a_1 +  a_2 = n}\frac{|\mathcal{D}_{a_{1},a_{2}}|}{\genfrac{[}{]}{0pt}{}{n}{a_1,a_2}_{q}q^{a_1a_2} }
			&=\sum_{(D_1,  D_2)\in\mathcal{D}}\frac{1}{\genfrac{[}{]}{0pt}{}{n}{\dim(D_1),  n-\dim(D_1)}_{q}q^{\dim(D_1)(n-\dim(D_1)}}\\
			&=\sum_{D\in \mathcal{D}_1}\frac{1}{\genfrac{[}{]}{0pt}{}{n}{\dim(D)}_{q}q^{\dim(D)(n-\dim(D))}}\sum\limits_{\substack{{ (D,  D_2)\in \mathcal{D}}}}1.
		\end{aligned}
	\end{equation}
	As any $k$-dimensional subspace in $L_q(n)$ has $q^{k(n-k)}$ complements, it follows that
	\[
	\sum\limits_{\substack{{ (D,  D_2)\in \mathcal{D}}}}1 \leq q^{\dim(D)(n-\dim(D))}.
	\]
	Therefore, by the inequality \eqref{a4} in the Rota-Harper theorem, the last expression in \eqref{d1} is at most $t_1$. Now suppose that $r> 2$. Note that for any $D \in \mathcal{D}_1$, by the induction hypothesis we have
	\begin{equation*}
		\begin{aligned}
				&\sum\limits_{\substack{{ (D, D_2, \ldots, D_r)\in \mathcal{D}} }}\frac{1}{\genfrac{[}{]}{0pt}{}{n-\dim(D)}{\dim(D_2), \ldots, \dim(D_r)}_{q}\prod\limits_{2\le i<j}q^{\dim(D_i)\dim(D_j)}}\\
			&=\sum\limits_{F \oplus D = \mathbb{F}_q^n}\sum\limits_{\substack{ (D, D_2, \ldots, D_r)\in \mathcal{D} \\ D_2 \oplus \cdots \oplus D_r = F}}\frac{1}{\genfrac{[}{]}{0pt}{}{\dim(F)}{\dim(D_2), \ldots, \dim(D_r)}_{q}\prod\limits_{2\le i<j}q^{\dim(D_i)\dim(D_j)}}\\
			&\leq q^{\dim(D)(n-\dim(D))} t_2\cdots t_{r-1}.
		\end{aligned}
	\end{equation*}
	Therefore,
		\begin{equation*}
		\begin{aligned}
			&\sum_{a_1 + \cdots + a_r = n}\frac{|\mathcal{D}_{a_{1},\dots,a_{r}}|}{\genfrac{[}{]}{0pt}{}{n}{a_1,\ldots,a_r}_{q} \prod\limits_{i<j}q^{a_ia_j}}\\
			&=\sum_{(D_1, \ldots, D_r)\in\mathcal{D}}\frac{1}{\genfrac{[}{]}{0pt}{}{n}{\dim(D_1), \ldots, \dim(D_r)}_{q}\prod\limits_{i<j}q^{\dim(D_i)\dim(D_j)}}\\
			&=\sum_{D\in \mathcal{D}_1}\frac{1}{\genfrac{[}{]}{0pt}{}{n}{\dim(D)}_{q}q^{\dim(D)(n-\dim(D))} }\sum\limits_{{ (D,D_2,  \ldots, D_r)\in \mathcal{D}}}\frac{1}{\genfrac{[}{]}{0pt}{}{n-\dim(D)}{\dim(D_2), \ldots, \dim(D_r)}_{q}\prod\limits_{2\le i<j}q^{\dim(D_i)\dim(D_j)}}\\
			&\leq t_2\cdots t_{r-1}\sum_{D\in \mathcal{D}_1}\frac{1}{\genfrac{[}{]}{0pt}{}{n}{\dim(D)}_{q}}.
		\end{aligned}
	\end{equation*}
	Again by \eqref{a4} in the Rota-Harper Theorem, the last expression is at most $\sigma = t_1\cdots t_{r-1}$, which completes the proof of the inequality \eqref{4.1}.
	
	Note that
	\[
	\mathcal{D} = \bigsqcup_{a_{1}+\cdots+a_{r}=n}\mathcal{D}_{a_{1},\dots,a_{r}}
	\]
	and $0\le|\mathcal{D}_{a_{1},\dots,a_{r}}|\le \genfrac{[}{]}{0pt}{}{n}{a_1,\ldots,a_r}_{q}\prod_{i<j}q^{a_ia_j}$ for each $\mathcal{D}_{a_{1},\dots,a_{r}}$. 
	When $\sigma>\binom{n+r-1}{r-1}$, we assume that $m_i = 0$ for all $i > \binom{n+r-1}{r-1}$ and then $|\mathcal{D}|\le m_1+\cdots+m_\sigma$ trivially holds. For the case $\sigma \leq \binom{n+r-1}{r-1}$, suppose for contradiction that $|\mathcal{D}| > m_1+\cdots+m_\sigma$.
	Then, by Lemma \ref{d} we have
	\begin{equation*}
		\sum_{a_{1}+\cdots+a_{r}=n} \frac{|\mathcal{D}_{a_{1},\dots,a_{r}}|}{\genfrac{[}{]}{0pt}{}{n}{a_1,\ldots,a_r}_{q}\prod\limits_{i<j}q^{a_ia_j}} > \sigma,
	\end{equation*}
	which contradicts the inequality \eqref{4.1}. This completes the proof.
\end{proof}

An \emph{$r$-multichain} in the subspaces lattice $L_q(n)$ is a tuple $(C_1, \ldots, C_r)$ of elements of $L_q(n)$ such that $C_1 \subseteq \cdots \subseteq C_r$.
Let $\mathcal{C}$ be a family of such $r$-multichains, and let $a_1, \ldots, a_r$ be non-negative integers with $a_1 + \cdots + a_r \leq n$. We define 
$$\mathcal{C}_k := \{C_k \mid (C_1, \ldots, C_k, \ldots, C_r) \in \mathcal{C}\}$$
for each $k \in \{1, \ldots, r\}$, and
\[
\mathcal{C}_{a_1,\dots,a_r} := \left\{ (C_1, \ldots, C_r) \in \mathcal{C} \mid \operatorname{rk}(C_i) = a_1 + \cdots + a_i \text{ for all } i = 1, \ldots, r \right\}.
\]
\begin{Theorem}\label{Chq}
	Let $t_1, \ldots, t_r$ be positive integers and $\tau= t_1t_2\cdots t_r$. Suppose $\mathcal{C}$ is a family of $r$-multichains in $L_q(n)$ such that $\mathcal{C}_k$ is $t_k$-chain free for each $k \in \{1, \ldots, r\}$. Then
	\begin{equation}\label{chq1}
		\sum_{a_1 + \cdots + a_{r+1} = n}\frac{|\mathcal{C}_{a_{1},\dots,a_{r}}|}{\genfrac{[}{]}{0pt}{}{n}{a_1,\ldots,a_{r+1}}_{q}}\le \tau.
	\end{equation}
	Consequently, 
	\[
	|\mathcal{C}| \leq m_1+\cdots+m_\tau,
	\]
	where $m_1,\ldots,m_\tau$ are the $\tau$ largest $\genfrac{[}{]}{0pt}{}{n}{a_1,\ldots,a_{r+1}}_{q}$ for non-negative integers $a_1,\ldots,a_{r+1}$ with $a_1+\cdots + a_{r+1}=n$.
\end{Theorem}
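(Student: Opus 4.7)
The plan is to mirror the proof of Theorem \ref{Ch} (the Boolean $r$-multichain case), but replace multinomial coefficients by $q$-multinomial coefficients and exploit the fact that the interval $[A,\mathbb{F}_q^n]$ in $L_q(n)$ is isomorphic to $L_q(n-\dim A)$ via the quotient $V\mapsto V/A$. I would induct on $r$. For the base case $r=1$, an $r$-multichain is just a single subspace $C_1$, and since $\genfrac{[}{]}{0pt}{}{n}{a_1,a_2}_q=\genfrac{[}{]}{0pt}{}{n}{a_1}_q$ whenever $a_1+a_2=n$, the inequality \eqref{chq1} is exactly the $q$-analog Rota--Harper inequality \eqref{a4} applied to $\mathcal{C}_1$.

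For the inductive step, I would use the factorization
\[
\genfrac{[}{]}{0pt}{}{n}{a_1,a_2,\ldots,a_{r+1}}_{q}
=\genfrac{[}{]}{0pt}{}{n}{a_1}_{q}\,\genfrac{[}{]}{0pt}{}{n-a_1}{a_2,\ldots,a_{r+1}}_{q}
\]
and rewrite
\begin{equation*}
\sum_{a_1+\cdots+a_{r+1}=n}\frac{|\mathcal{C}_{a_{1},\dots,a_{r}}|}{\genfrac{[}{]}{0pt}{}{n}{a_1,\ldots,a_{r+1}}_{q}}
=\sum_{A\in\mathcal{C}_1}\frac{1}{\genfrac{[}{]}{0pt}{}{n}{\dim A}_{q}}
\sum_{\substack{(C_1,\ldots,C_r)\in\mathcal{C}\\ C_1=A}}\frac{1}{\genfrac{[}{]}{0pt}{}{n-\dim A}{\dim C_2-\dim A,\ldots,n-\dim C_r}_{q}}.
\end{equation*}
The inner sum ranges over $(r-1)$-multichains $A\subseteq C_2\subseteq\cdots\subseteq C_r\subseteq\mathbb{F}_q^n$, which under the quotient map $V\mapsto V/A$ become $(r-1)$-multichains in $L_q(\mathbb{F}_q^n/A)\cong L_q(n-\dim A)$. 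The critical observation, and main verification point, is that this quotient map is a lattice isomorphism on $[A,\mathbb{F}_q^n]$, so if $\mathcal{C}_k$ is $t_k$-chain free (for $k\geq 2$), then the pushed-forward family $\{C_k/A\}$ is also $t_k$-chain free; any length-$t_k$ chain in the quotient would pull back to such a chain in $\mathcal{C}_k$.

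Once that is verified, the induction hypothesis in $L_q(n-\dim A)$ with parameters $t_2,\ldots,t_r$ bounds the inner sum by $t_2\cdots t_r$. Then applying \eqref{a4} to $\mathcal{C}_1$ (which is $t_1$-chain free) yields
\[
\sum_{a_1+\cdots+a_{r+1}=n}\frac{|\mathcal{C}_{a_{1},\dots,a_{r}}|}{\genfrac{[}{]}{0pt}{}{n}{a_1,\ldots,a_{r+1}}_{q}}
\leq t_2\cdots t_r\sum_{A\in\mathcal{C}_1}\frac{1}{\genfrac{[}{]}{0pt}{}{n}{\dim A}_{q}}\leq t_1t_2\cdots t_r=\tau,
\]
proving \eqref{chq1}.

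The corollary $|\mathcal{C}|\leq m_1+\cdots+m_\tau$ follows by the same Lemma \ref{d} argument used in Theorems \ref{Dec}, \ref{Ch}, and \ref{Decq}: decompose $\mathcal{C}=\bigsqcup_{a_1+\cdots+a_{r+1}=n}\mathcal{C}_{a_1,\ldots,a_r}$, note $0\leq|\mathcal{C}_{a_1,\ldots,a_r}|\leq\genfrac{[}{]}{0pt}{}{n}{a_1,\ldots,a_{r+1}}_{q}$, handle the trivial case when $\tau$ exceeds the number of compositions of $n$ into $r+1$ non-negative parts, and for the remaining case apply Lemma \ref{d} with $x_i=|\mathcal{C}_{a_1,\ldots,a_r}|$ and $c_i=\genfrac{[}{]}{0pt}{}{n}{a_1,\ldots,a_{r+1}}_{q}$ arranged in decreasing order; an overshoot $|\mathcal{C}|>m_1+\cdots+m_\tau$ would force the LYM sum to exceed $\tau$, contradicting \eqref{chq1}. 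I expect the main obstacle is not technical but conceptual: one must clearly articulate the quotient isomorphism $L_q(n)_{\geq A}\cong L_q(n-\dim A)$ and explicitly check that it transports the chain-free hypothesis on each $\mathcal{C}_k$ (for $k\geq 2$) to the induced family in the quotient. Everything else is a routine tracking of $q$-multinomial identities and an invocation of the induction hypothesis plus \eqref{a4}.
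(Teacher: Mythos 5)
Your proposal is correct and follows essentially the same route as the paper's proof: induction on $r$ with base case the $q$-analog Rota--Harper inequality \eqref{a4}, the same factorization of the $q$-multinomial coefficient to split off the $C_1$-component, the induction hypothesis applied to the inner sum over $(C_2,\ldots,C_r)$ with $C_1$ fixed, and the same Lemma \ref{d} argument for the cardinality bound. The only difference is that you spell out the quotient isomorphism $[A,\mathbb{F}_q^n]\cong L_q(n-\dim A)$ and the transport of the $t_k$-chain-free hypotheses, a step the paper leaves implicit.
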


\begin{proof}[Proof of Theorem \ref{Chq}]
	We proceed by induction on $r$. For $r=1$, the inequality \eqref{chq1} reduces to the inequality \eqref{a4} stated in the Rota-Harper Theorem. Suppose that $r>1$ and that the inequality \eqref{chq1} holds for $r-1$. Then
	\begin{equation*}
		\begin{aligned}
			\sum_{a_1 + \cdots + a_{r+1} = n}\frac{|\mathcal{C}_{a_{1},\dots,a_{r}}|}{\genfrac{[}{]}{0pt}{}{n}{a_1,\ldots,a_{r+1}}_{q}}
			&=\sum_{(C_1, \ldots, C_r)\in\mathcal{C}}\frac{1}{\genfrac{[}{]}{0pt}{}{n}{\dim(C_1),\dim(C_2)-\dim(C_1) ,\ldots,n-\dim(C_r)}_{q}}\\
			&=\sum_{C\in \mathcal{C}_1}\frac{1}{\genfrac{[}{]}{0pt}{}{n}{\dim(C)}_{q}}\sum\limits_{\substack{{ (C,C_2, \ldots, C_r)\in \mathcal{C}} }}\frac{1}{\genfrac{[}{]}{0pt}{}{n-\dim(C)}{\dim(C_2)-\dim(C_1), \ldots,n-\dim(C_r)}_{q}}\\
			&\leq  \sum_{C\in \mathcal{C}_1}\frac{1}{\genfrac{[}{]}{0pt}{}{n}{\dim(C)}_{q}} t_2\cdots t_{r}.\\
		\end{aligned}
	\end{equation*}
	Again, by \eqref{a4}, the last expression is at most $\tau = t_1\cdots t_{r}$.
	
	Note that
	\[
	\mathcal{C} = \bigsqcup_{a_{1}+\cdots+a_{r+1}=n}\mathcal{C}_{a_{1},\dots,a_{r}}
	\]
	and $0\le|\mathcal{C}_{a_{1},\dots,a_{r}}|\le \genfrac{[}{]}{0pt}{}{n}{a_1,\ldots,a_{r+1}}_{q}$ for each $\mathcal{C}_{a_{1},\dots,a_{r}}$. When $\tau >\binom{n+r}{r}$, we assume that $m_i = 0$ for all $i > \binom{n+r}{r}$ and then $|\mathcal{C}|\le m_1+\cdots+m_\tau$ trivially holds. For the case $\tau \leq \binom{n+r}{r}$, suppose for contradiction that $|\mathcal{C}| > m_1+\cdots+m_\tau$.
	Then, by Lemma \ref{d} we have
	\begin{equation*}
		\sum_{a_{1}+\cdots+a_{r+1}=n} \frac{|\mathcal{C}_{a_{1},\dots,a_{r}}|}{\genfrac{[}{]}{0pt}{}{n}{a_1,\ldots,a_{r+1}}_{q}} > \tau,
	\end{equation*}
	which contradicts the inequality \eqref{chq1}. This completes the proof.
\end{proof}

\section{Generalizations of the continuous analog of the LYM inequality}\label{sec4}

Let $\mathrm{Mod}(n)$ denote the subspace lattice of $\mathbb{R}^n$. For each $k \in \{0,\ldots, n\}$, the Grassmannian $\mathrm{Gr}(n, k)$ is the set of all $k$-dimensional subspaces in $\mathrm{Mod}(n)$, which is known as the Grassmann manifold. A \emph{flag} in $\mathrm{Mod}(n)$ is a tuple $F = (F_0, \ldots, F_n)$ of elements of $\mathrm{Mod}(n)$, such that $F_k \in \mathrm{Gr}(n,k)$ for $k \in \{0,\ldots, n\}$ and $
F_0 \subseteq F_1 \subseteq \cdots \subseteq F_n.$ Denote by $\mathrm{Flag}(n)$ the set of all flags in $\mathrm{Mod}(n)$, which is also called the flag manifold. 

Let $O(n)$ denote the $n$-dimensional orthogonal group, which naturally acts on $\mathrm{Gr}(n,k)$ and $\mathrm{Flag}(n)$ via matrix multiplication on $\mathbb{R}^n$. Both $\mathrm{Gr}(n,k)(k \geq 1)$ and $\mathrm{Flag}(n)$ carry a unique $O(n)$-invariant measure, known as the Haar measures \cite{Haar, Haar2} and denoted by $\nu_k^n$ and $\phi_n$, respectively.
To make this concrete, let $\omega_{n}$ denote the volume of the unit ball in $\mathbb{R}^n$, and let $S^{n-1}$ be the unit sphere with its standard $O(n)$-invariant measure denoted by $\rho_{n-1}$. The measure $\nu_1^n$ on $\mathrm{Gr}(n,1)$ is defined by
\[
\nu_1^n(A) = \frac{\rho_{n-1}(\bigcup_{x \in A}x \cap S^{n-1})}{\omega_{n-1}},
\]
for any subset $A \subseteq \mathrm{Gr}(n,1)$.
By orthogonal duality, $\nu_1^n$ induces the measure $\nu_{n-1}^n$ on $\mathrm{Gr}(n, n-1)$.
Inductively, this duality extends to a measure $\phi_n$ on $\mathrm{Flag}(n)$: for any simple function $f(F_0, F_1, \ldots, F_n)$ on $\mathrm{Flag}(n)$,
\[
\int f d\phi_n = \int\int f(F_0, F_1, \ldots, F_n)d\phi_{n-1}(F_0, \ldots, F_{n-1})d\nu_{n-1}^n(F_{n-1}).
\]
Define $[n]_{\mathbb{R}} := \dfrac{n\omega_{n}}{2\omega_{n-1}}$.
The total measure of $\mathrm{Flag}(n)$ turns out to be
\[
\phi_n\big(\mathrm{Flag}(n)\big) = [n]_{\mathbb{R}}! := [n]_{\mathbb{R}}[n-1]_{\mathbb{R}}\cdots[1]_{\mathbb{R}} =  \dfrac{n! \omega_n}{2^n}.
\]
Using $\phi_n$, we induce the measure $\nu_{k}^{n}$ on $\mathrm{Gr}(n,k)$ by, for any subset $A \subseteq \mathrm{Gr}(n, k)$,
\[
\nu_{k}^{n}(A) = \frac{1}{[k]_{\mathbb{R}}![n-k]_{\mathbb{R}}!}\phi_n\big(\mathrm{Flag}(A)\big),
\]
where $\mathrm{Flag}(A) \subseteq \mathrm{Flag}(n)$ denotes the subset consisting of all flags $(F_0, \ldots, F_n)$ such that $F_k \in A$.
In particular, we have
\begin{equation}\label{con_nk}
	\nu_{k}^{n}\big(\mathrm{Gr}(n, k)\big) = \dfrac{[n]_{\mathbb{R}}!}{[k]_{\mathbb{R}}![n-k]_{\mathbb{R}}!} = \binom{n}{k} \frac{\omega_{n}}{\omega_{k}\omega_{n-k}},
\end{equation}
which is also denoted by $\genfrac{[}{]}{0pt}{}{n}{k}_{\mathbb{R}}$.

We now turn to the continuous analog of the LYM inequality. An \emph{$r$-decomposition} of $\mathbb{R}^{n}$ is a tuple $(D_1, \ldots, D_r)$ of nonzero subspaces in $\mathrm{Mod}(n)$ satisfying $\bigoplus_{i=1}^r D_i = \mathbb{R}^n$ and $D_i \perp D_j$ for all $i \neq j$. We explicitly exclude the zero subspace from this definition, since $\mathrm{Gr}(n,0)$ is not compatible with the recursive definition of the Haar measure on $\mathrm{Gr}(n,k)$ for $k \geq 1$.
Denote by $\mathcal{D}(\mathbb{R}^n,r)$ the set of all $r$-decompositions of $\mathbb{R}^{n}$. Given positive integers $a_1, \ldots, a_r$ such that $a_1 + \cdots + a_r =n$, let $\mathcal{D}(\mathbb{R}^n, r)_{a_1,a_2,\ldots,a_r}$ be the set of all $r$-decompositions $(D_1, \ldots, D_r) \in \mathcal{D}(\mathbb{R}^n,r)$ such that $\dim(D_i)= a_i$ for $i= 1, \ldots, r$. Evidently we have the finite disjoint union
\[
\mathcal{D}(\mathbb{R}^n,r)=\bigsqcup_{a_{1}+\cdots+a_{r}=n}{\mathcal{D}(\mathbb{R}^n, r)_{a_1,a_2,\ldots,a_r}}.
\]
Each $\mathcal{D}(\mathbb{R}^n, r)_{a_1,a_2,\ldots,a_r}$ is also a compact smooth manifold, which carries the unique Haar measure and is denoted by $\nu_{a_1,a_2,\ldots,a_r}^{n}$. To construct this measure explicitly, we call a flag $(F_0, \ldots, F_n)$ \emph{compatible} with $(D_1, \ldots, D_r) \in\mathcal{D}(\mathbb{R}^n, r)_{a_1,a_2,\ldots,a_r}$ if $F_{a_1+\cdots+a_i} = D_1 \oplus \cdots \oplus D_i$ for all $i \in \{1, \ldots, r\}$.
Then the measure $\nu_{a_1,a_2,\ldots,a_r}^{n}$ is induced from $\phi_n$ on $\mathrm{Flag}(n)$ by, for any $\mathcal{D} \subseteq \mathcal{D}(\mathbb{R}^n, r)_{a_1,a_2,\ldots,a_r}$, 
\begin{equation}\label{val}
	\nu_{a_1,a_2,\ldots,a_r}^{n}(\mathcal{D})=\frac{1}{[a_1]_{\mathbb{R}}![a_2]_{\mathbb{R}}!\cdots[a_r]_{\mathbb{R}}!}\phi_n\big(\mathrm{Flag}(\mathcal{D})\big),
\end{equation}
where $\mathrm{Flag}(\mathcal{D})$ is the set of all flags compatible with some $r$-decomposition of $\mathcal{D}$.
In particular, the total measure is
\begin{equation}\label{evi}
	\nu_{a_1,a_2,\ldots,a_r}^{n}\big(\mathcal{D}(\mathbb{R}^n,r)_{a_1,\ldots,a_r}\big) = \frac{[n]_{\mathbb{R}}!}{[a_1]_{\mathbb{R}}!\cdots[a_r]_{\mathbb{R}}!}.
\end{equation}
These values are called \emph{multiflag coefficients} and denoted by  $\genfrac{[}{]}{0pt}{}{n}{a_1,\ldots,a_r}_{\mathbb{R}}$. Furthermore, the Haar measure $\nu_{n;r}$ on $\mathcal{D}(\mathbb{R}^n,r)$ is induced from $\nu_{a_1,a_2,\ldots,a_r}^{n}$ by aggregating the measures of its disjoint components. That is, for any subset $\mathcal{D} \subseteq \mathcal{D}(\mathbb{R}^n,r)$,
\[
\nu_{n;r}(\mathcal{D})=\sum_{a_{1}+\cdots+a_{r}=n}\nu_{a_1,\ldots,a_r}^n(\mathcal{D}_{a_1,\ldots,a_r}),
\]
where $\mathcal{D}_{a_1,\ldots,a_r}=\mathcal{D} \cap \mathcal{D}(\mathbb{R}^n,r)_{a_1,\ldots,a_r}$. For more details, we refer the reader to \cite{Klain and Rota}.

\begin{Theorem}\label{DecR}
	Let  $t_1, \ldots, t_r$ be positive integers and $\sigma = \frac{t_1t_2\cdots t_r}{\mathrm{max}  \{ t_{1} ,t_2,\ldots,t_r \}}$. Suppose $\mathcal{D}$ is a family of $r$-decompositions of $\mathbb{R}^n$ such that $\mathcal{D}_k$ is $t_k$-chain free for each $k \in \{1, \ldots, r\}$. Then
	\begin{equation}\label{2.1}
		\sum_{a_{1}+\cdots+a_{r}=n}\dfrac{\nu_{a_1,\ldots,a_r}^n(\mathcal{D}_{a_1,\ldots,a_r})}{\genfrac{[}{]}{0pt}{}{n}{a_1,\ldots,a_r}_{\mathbb{R}}}\le \sigma.
	\end{equation}
	Consequently, 
	\[
	\nu_{n;r}(\mathcal{D}) \leq m_1 + \cdots + m_\sigma,
	\]
	where $m_1, \ldots, m_\sigma$ are the $\sigma$ largest multiflag coefficients $\genfrac{[}{]}{0pt}{}{n}{a_1,\ldots,a_r}_{\mathbb{R}}$ for positive integers $a_1, \ldots, a_r$ with $a_1+\cdots + a_r=n$.
\end{Theorem}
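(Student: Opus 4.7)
The plan is to mirror the inductive scheme used for Theorems \ref{Dec} and \ref{Decq}, with the continuous Klain--Rota inequality \eqref{a6} replacing the Rota--Harper theorem at each step. Without loss of generality I will assume $t_r = \max\{t_1, \ldots, t_r\}$, so $\sigma = t_1 \cdots t_{r-1}$, and induct on $r$.

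For the base case $r = 2$, every $(D_1, D_2) \in \mathcal{D}(\mathbb{R}^n, 2)$ satisfies $D_2 = D_1^{\perp}$, so the second component is redundant and $\mathcal{D}$ is in bijection with $\mathcal{D}_1$. Since orthogonal complementation is an order-reversing involution of $\mathrm{Mod}(n)$, the $t_1$-chain-free property of $\mathcal{D}_1$ transfers to $\mathcal{D}_2$, which is then automatically $t_2$-chain-free since $t_1 \leq t_2$. Under this identification the left-hand side of \eqref{2.1} collapses to $\sum_{k=1}^{n-1} \nu_k^n(\mathcal{D}_1 \cap \mathrm{Gr}(n,k)) / \genfrac{[}{]}{0pt}{}{n}{k}_{\mathbb{R}}$, which \eqref{a6} bounds by $t_1 = \sigma$.

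For the inductive step I plan to exploit two factorizations. First, the multiflag coefficient splits multiplicatively as $\genfrac{[}{]}{0pt}{}{n}{a_1, \ldots, a_r}_{\mathbb{R}} = \genfrac{[}{]}{0pt}{}{n}{a_1}_{\mathbb{R}} \genfrac{[}{]}{0pt}{}{n-a_1}{a_2, \ldots, a_r}_{\mathbb{R}}$, which is immediate from \eqref{evi}. Second, and more delicately, the Haar measure $\nu_{a_1, \ldots, a_r}^n$ disintegrates along the projection $(D_1, \ldots, D_r) \mapsto D_1$: conditional on $D_1 = D \in \mathrm{Gr}(n, a_1)$, the tuple $(D_2, \ldots, D_r)$ ranges over $(r-1)$-decompositions of $D^{\perp}$ with the intrinsic Haar measure on $\mathcal{D}(D^{\perp}, r-1)_{a_2, \ldots, a_r}$. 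For each fixed $D \in \mathcal{D}_1$, the fiber family inherits the $t_k$-chain-free condition from $\mathcal{D}_k$ for $k = 2, \ldots, r$, so applying the induction hypothesis fiberwise will bound the inner sum by $t_2 \cdots t_{r-1}$. A final invocation of \eqref{a6} on the $t_1$-chain-free family $\mathcal{D}_1$ will supply the outer factor $t_1$, yielding the desired bound $\sigma = t_1 t_2 \cdots t_{r-1}$.

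The ``Consequently'' statement then follows from Lemma \ref{d} exactly as in the discrete proofs: I would decompose $\nu_{n;r}(\mathcal{D})$ into the component measures, use the trivial bound $\nu_{a_1, \ldots, a_r}^n(\mathcal{D}_{a_1, \ldots, a_r}) \leq \genfrac{[}{]}{0pt}{}{n}{a_1, \ldots, a_r}_{\mathbb{R}}$ from \eqref{evi}, and argue by contradiction against \eqref{2.1}. The main obstacle I anticipate is justifying the disintegration of $\nu_{a_1, \ldots, a_r}^n$ rigorously. By \eqref{val} this measure is pulled back from $\phi_n$ on flags compatible with a given decomposition, and a Fubini-type argument using the recursive construction of $\phi_n$ should produce the claimed decomposition; however, the normalization constants $[a_i]_{\mathbb{R}}!$ must be tracked carefully, and one needs to confirm that restricting $\phi_n$ to flags passing through $D$ at level $a_1$ reproduces, up to the isomorphism $D^{\perp} \cong \mathbb{R}^{n-a_1}$, the flag measure of $D^{\perp}$.
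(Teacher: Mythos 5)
Your proposal follows essentially the same route as the paper's proof: the disintegration of $\nu_{a_1,\ldots,a_r}^n$ along the projection onto the first component (which the paper justifies via the fibre-bundle structure of $\mathrm{Flag}(n)$ over $\mathrm{Gr}(n,a_1)$ and the recursive definition of $\phi_n$, exactly the Fubini-type argument you anticipate), the fiberwise application of the induction hypothesis to get $t_2\cdots t_{r-1}$, the outer application of the Klain--Rota inequality \eqref{a6} to the $t_1$-chain-free family $\mathcal{D}_1$, and Lemma \ref{d} for the ``Consequently'' part. The only cosmetic difference is in the base case $r=2$, where the paper simply observes that $\nu_{n-a_1}^{n-a_1}(\mathcal{D}_{D,n-a_1})=1$ rather than invoking the order-reversing involution $D\mapsto D^{\perp}$; both reduce to applying \eqref{a6} to $\mathcal{D}_1$.
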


\begin{proof}
	It is known that the flag manifold $\mathrm{Flag}(n)$ is a bundle over $\mathrm{Gr}(n,k)$ with projection map $\pi_k$,
	where $\pi_k(F_0,F_1,\dots,F_n)=F_k$ for all $(F_0,F_1,\dots,F_n)\in\mathrm{Flag}(n)$. See \cite[pp.~120--122]{Warner} and \cite[pp.~30--32]{Norman}. The fibre of the bundle is $\mathrm{Flag}(k)\times\mathrm{Flag}(n-k)$.
	For each $D\in\mathrm{Gr}(n,k)$, there exists an open neighborhood $U\subseteq\mathrm{Gr}(n,k)$ and a local trivialization homeomorphism $ \phi: \pi_{k}^{-1}(U) \longrightarrow U \times \mathrm{Flag}(k)\times\mathrm{Flag}(n-k) $ such that
	$ \pi_k = \text{proj}_U \circ \phi $, where $ \text{proj}_U $ denotes the projection onto the first coordinate.
	The measure on  $\pi_{k}^{-1}(U)$, which is homeomorphic to $ U \times \mathrm{Flag}(k)\times\mathrm{Flag}(n-k) $, is induced by the measure $\nu_{k}^{n}\times\phi_k\times\phi_{n-k}$ on $\mathrm{Gr}(n,k) \times \mathrm{Flag}(k)\times\mathrm{Flag}(n-k)$.
	
	Given positive integers $a_1,\cdots,a_r$ satisfying $a_1+\cdots+a_r=n$, recall that
	\[
	(\mathcal{D}_{a_{1},\ldots,a_{r}})_1 = \{ D \in \mathcal{D}_1 \mid \mathrm{dim} (X)=a_1 \}.
	\]
	Taking $D \in (\mathcal{D}_{a_{1},\ldots,a_{r}})_1$, we define
	\[
	\mathcal{D}_{D, a_2, \ldots, a_r} = \{(D_2, \ldots, D_r) \mid (D, D_2, \ldots, D_r) \in \mathcal{D}_{a_{1},\ldots,a_{r}}\}.
	\]
	Noting that $\big(\mathcal{D}(\mathbb{R}^n, r)\big)_{D, a_2, \ldots, a_r}$ is homeomorphic to $\big(\mathcal{D}(\mathbb{R}^{n-a_1}, r-1)\big)_{a_2, \ldots, a_r}$, we regard $\mathcal{D}_{D, a_2, \ldots,a_r}$ as a subset of $\big(\mathcal{D}(\mathbb{R}^{n-a_1}, r-1)\big)_{a_2, \ldots, a_r}$.
   	Therefore,
	\begin{equation*}
		\begin{aligned}
			&\phi_n\big(\mathrm{Flag}(\mathcal{D}_{a_1,\ldots,a_r})\big)\\
			&=\int_{\mathrm{Gr}(n,a_1)}\chi _{(\mathcal{D}_{a_{1},\ldots,a_{r}})_1}(D)\Big(\int_{\mathrm{Flag}(a_1)} 1 d\phi_{a_1}\Big)\phi_{n-a_1}\big(\mathrm{Flag}(\mathcal{D}_{D, a_2, \ldots, a_r})\big) d\nu_{a_1}^{n}\\[3pt]
			&=[a_1]!\int_{\mathrm{Gr}(n,a_1)}\chi _{(\mathcal{D}_{a_{1},\ldots,a_{r}})_1}(D)\phi_{n-a_1}\big(\mathrm{Flag}(\mathcal{D}_{D, a_2, \ldots, a_r})\big)d\nu_{a_1}^{n},
		\end{aligned}
	\end{equation*}
	where $\chi_A$ denotes the indicator function of the set $A$.
	It follows from \eqref{val} that
	\begin{equation*}
		\nu_{a_1,\ldots,a_r}^n(\mathcal{D}_{a_1,\ldots,a_r})=\int_{\mathrm{Gr}(n,a_1)}\chi_{(\mathcal{D}_{a_{1},\ldots,a_{r}})_1}(D)\nu_{a_2,\ldots,a_r}^{n-a_1}\big(\mathcal{D}_{D, a_2, \ldots, a_r}\big)d\nu_{a_1}^{n}.
	\end{equation*}
	Without loss of generality, assume that $t_{r}=\mathrm{max} \left \{ t_{1} ,t_2,\ldots,t_r \right \}$.
	For $r = 2$, the inequality \eqref{2.1} reduces to
	\begin{equation}\label{d2}
		\begin{aligned}
			\sum_{a_1 +  a_2 = n}\dfrac{\nu_{a_1,a_2}^n(\mathcal{D}_{a_1,a_2})}{\genfrac{[}{]}{0pt}{}{n}{a_1,a_2}_{\mathbb{R}}}
			&=\sum_{a_1=1}^{n-1}\dfrac{\int_{\mathrm{Gr}(n,a_1)}\chi_{(\mathcal{D}_{a_{1},n-a_{1}})_1}(D)\nu_{n-a_1}^{n-a_1}(\mathcal{D}_{D,n-a_1})d\nu_{a_1}^{n}}{\genfrac{[}{]}{0pt}{}{n}{a_{1}}_{\mathbb{R}}}.
		\end{aligned}
	\end{equation}
	Since $\nu_{k}^{k}(\mathbb{R}^k)=1$ for any $k \geq 1$. It follows that for each $D \in (\mathcal{D}_{a_{1},n-a_{1}})_1$, we have $\nu_{n-a_1}^{n-a_1}(\mathcal{D}_{D,n-a_1})=1$.
	Hence \eqref{d2} equals
	\[
	\sum_{a_1=1}^{n-1}\dfrac{\nu_{a_1}^{n}\big((\mathcal{D}_{a_1, n-a_1})_1\big)}{\genfrac{[}{]}{0pt}{}{n}{a_{1}}_{\mathbb{R}}},
	\]
	which is at most $t_1$ by \eqref{a6} in the Klain–Rota Theorem.
	
	Now suppose that $r>2$. In this case,
	\begin{equation*}
		\begin{aligned}
			&\sum_{a_{1}+\cdots+a_{r}=n}\dfrac{\nu_{a_1,\ldots,a_r}(\mathcal{D}_{a_1,\ldots,a_r})}{\genfrac{[}{]}{0pt}{}{n}{a_1,\ldots,a_r}_{\mathbb{R}}}\\
			&=\sum_{a_1=1}^{n-r+1}\sum_{a_2+\cdots+a_r=n-a_1}\dfrac{\int_{\mathrm{Gr}(n,a_1)}\chi_{(\mathcal{D}_{a_{1},\ldots,a_{r}})_1}(D)\nu_{a_2,\ldots,a_r}^{n-a_1}(\mathcal{D}_{D, a_2, \ldots, a_r})d\nu_{a_1}^{n}}{\genfrac{[}{]}{0pt}{}{n}{a_{1}}_{\mathbb{R}}\genfrac{[}{]}{0pt}{}{n-a_1}{a_2,\ldots,a_r}_{\mathbb{R}}}.
		\end{aligned}
	\end{equation*}
	It follows from the induction hypothesis that for every $D \in (\mathcal{D}_{a_1,\ldots,a_r})_1$,
	\[
	\sum_{a_{2}+\cdots+a_{r}=n-a_{1}}\dfrac{\nu_{a_2,\ldots,a_r}^{n-a_1}(\mathcal{D}_{D, a_2, \ldots, a_r})}{\genfrac{[}{]}{0pt}{}{n-a_1}{a_2,\ldots,a_r}_{\mathbb{R}}}\le t_2\cdots t_{r-1}.
	\]
	Thus, we have
	\[
	\sum_{a_{1}+\cdots+a_{r}=n}\dfrac{\nu_{a_1,\ldots,a_r}(\mathcal{D}_{a_1,\ldots,a_r})}{\genfrac{[}{]}{0pt}{}{n}{a_1,\ldots,a_r}_{\mathbb{R}}}
	\le\sum_{a_1=1}^{n-r+1}\dfrac{\nu_{a_1}^{n}\big((\mathcal{D}_{a_1, \ldots, a_r})_1\big)}{\genfrac{[}{]}{0pt}{}{n}{a_{1}}_{\mathbb{R}}}t_2\cdots t_{r-1} \leq t_1\cdots t_{r-1} = \sigma.
	\]
	
	Note that
	\[
	\mathcal{D}(\mathbb{R}^n,r)=\bigsqcup_{a_{1}+\cdots+a_{r}=n}\mathcal{D}(\mathbb{R}^n, r)_{a_1,a_2,\ldots,a_r}
	\]
	and $0\le\nu_{a_1,\ldots,a_r}^n(\mathcal{D}_{a_1,\ldots,a_r})\le \genfrac{[}{]}{0pt}{}{n}{a_1,\ldots,a_r}_{\mathbb{R}}$ for each $\mathcal{D}_{a_{1},\dots,a_{r}}$ by \eqref{evi}. When $\sigma > \binom{n-1}{r-1}$, we assume that $m_i = 0$ for all $i > \binom{n-1}{r-1}$ and then $\nu_{n;r}(\mathcal{D}) \le m_1+\cdots+m_\sigma$ trivially holds. For the case $\sigma \leq \binom{n-1}{r-1}$, suppose for contradiction that $\nu_{n;r}(\mathcal{D}) > m_1 + \cdots + m_\sigma$.
	Then, by Lemma \ref{d},
	\begin{equation*}
		\sum_{a_{1}+\cdots+a_{r}=n}\dfrac{\nu_{a_1,\ldots,a_r}^n(\mathcal{D}_{a_1,\ldots,a_r})}{\genfrac{[}{]}{0pt}{}{n}{a_1,\ldots,a_r}_{\mathbb{R}}} > \sigma,
	\end{equation*}
	which contradicts the inequality \eqref{2.1}. This completes the proof.
	\end{proof}
	
	An $r$-chain of $\mathrm{Mod}(n)$ is a tuple $(C_1, \ldots, C_r)$ of nonzero subspaces in $\mathbb{R}^n$ such that $C_1 \subsetneq \cdots \subsetneq C_r$. We denote by $\mathcal{C}(\mathbb{R} ^{n},r)$ the set of all $r$-chains of $\mathrm{Mod}(n)$. Given positive integers $a_1, \ldots, a_r$ with $a_1 + \cdots + a_r < n$, denote by $\mathcal{C}(\mathbb{R} ^{n},r)_{a_1,\ldots,a_r}$ the subset consisting of all $r$-chains $(C_1, \ldots, C_r)$ such that $\dim(C_i)=a_1+a_2+\cdots+a_i$ for all $i= 1, \ldots, r$. $\mathcal{C}(\mathbb{R} ^{n},r)_{a_1,\ldots,a_r}$ is also a compact smooth manifold equipped with a unique $O(n)$-invariant measure $\mu_{a_1,\ldots,a_r}^n$. More precisely, we call a flag $(F_0, \ldots, F_n)$ of $\mathrm{Mod}(n)$ \emph{compatible} with $(C_1,\ldots,C_r) \in \mathcal{C}(\mathbb{R} ^{n},r)_{a_1,\ldots,a_r}$ if $F_{a_1+\cdots+a_i} = C_i$ for all $i \in \{1, \ldots, r\}$. Then the measure $\mu_{a_1,\ldots,a_r}^n$ is induced from the measure $\phi_n$ on $\mathrm{Flag}(n)$ by
	\begin{equation}\label{chain}
	\mu_{a_1,a_2,\ldots,a_r}^{n}(\mathcal{C})=\frac{1}{[a_1]_{\mathbb{R}}!\cdots[a_r]_{\mathbb{R}}![a_{r+1}]_{\mathbb{R}}!}\phi_n\big(\mathrm{Flag}(\mathcal{C})\big)
\end{equation}
	for any $\mathcal{C} \subseteq \mathcal{C}(\mathbb{R} ^{n},r)_{a_1,\ldots,a_r}$, where $a_{r+1} = n-\sum_{i=1}^{r}a_i$, and $\mathrm{Flag}(\mathcal{C})$ represents the subset of $\mathrm{Flag}(n)$ consisting of all flags compatible with some $r$-chain of $\mathcal{C}$.
	In particular, The total measure of $\mathcal{C}(\mathbb{R} ^{n},r)_{a_1,\ldots,a_r}$ is given by
	\[
	\mu_{a_1,a_2,\ldots,a_r}^{n}\big(\mathcal{C}(\mathbb{R} ^{n},r)_{a_1,\ldots,a_r}\big) = \genfrac{[}{]}{0pt}{}{n}{a_1,\ldots,a_{r}, a_{r+1}}_{\mathbb{R}}.
	\]
	 For any $\mathcal{C} \subseteq \mathcal{C}(\mathbb{R}^n, r)$, let $\mathcal{C}_{a_1,\ldots,a_r} = \mathcal{C} \cap \mathcal{C}(\mathbb{R} ^{n},r)_{a_1,\ldots,a_r}$ 
	 and $$\mathcal{C}_k = \{C_k \mid (C_1,\ldots,C_k,\ldots,C_r) \in \mathcal{C}\}.$$ 
	 These measures $\mu_{a_1,a_2,\ldots,a_r}^{n}$ induce the measure $\mu_{n;r}$ on $\mathcal{C}(\mathbb{R}^n,r)$ by
	\[
	\mu_{n;r}(\mathcal{C})=\sum_{a_{1}+\cdots+a_{r+1}=n}\mu_{a_1,\ldots,a_r}^n(\mathcal{C}_{a_1,\ldots,a_r})
	\]
	for any subset $\mathcal{C} \subseteq \mathcal{C}(\mathbb{R}^n,r)$.

\begin{Theorem}\label{ChR}
	Let  $t_1, \ldots, t_r$ be positive integers and $\tau = t_1t_2 \cdots t_r$. Suppose $\mathcal{C} $ is a family of $r$-chains in $\mathrm{Mod}(n)$ such that $\mathcal{C}_k$ is $t_k$-chain free for each $k \in \{1, \ldots, r\}$. Then
	\begin{equation}\label{2.2}
		\sum_{a_{1}+\cdots+a_{r+1}=n}\dfrac{\mu_{a_1,\ldots,a_r}^n(\mathcal{C}_{a_1,\ldots,a_r})}{\genfrac{[}{]}{0pt}{}{n}{a_1,\ldots,a_{r+1}}_{\mathbb{R}}}\le \tau.
	\end{equation}
	Consequently, 
	\[
	\mu_{n;r}(\mathcal{C})\leq m_1 +\cdots + m_\tau,
	\]
	where $m_1, \ldots, m_\tau$ are the $\tau$ largest multiflag coefficents $\genfrac{[}{]}{0pt}{}{n}{a_1,\ldots,a_{r+1}}$ for positive integers $a_1, \ldots, a_{r+1}$ with $a_1+\cdots + a_{r+1}=n$.
\end{Theorem}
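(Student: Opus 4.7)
The plan is to adapt the inductive argument of Theorem \ref{DecR} from $r$-decompositions to $r$-chains, in direct parallel with the adaptations of Theorems \ref{Dec} and \ref{Decq} to Theorems \ref{Ch} and \ref{Chq}. I would proceed by induction on $r$. In the base case $r = 1$, the measure $\mu_{a_1}^n$ coincides with $\nu_{a_1}^n$ on $\mathrm{Gr}(n, a_1)$ (the normalizations match once we set $a_2 = n - a_1$), so the inequality \eqref{2.2} collapses to the Klain--Rota Theorem \eqref{a6} applied to the $t_1$-chain free family $\mathcal{C}_1$.

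For the inductive step, I would exploit the same flag bundle $\pi_{a_1} : \mathrm{Flag}(n) \to \mathrm{Gr}(n, a_1)$ used in the proof of Theorem \ref{DecR}, now conditioning on the smallest subspace $C_1 = C$ of the chain rather than the first summand of a decomposition. Disintegrating $\phi_n$ along $\pi_{a_1}$ and invoking \eqref{chain}, I expect to obtain
\[
\mu_{a_1, \ldots, a_r}^n(\mathcal{C}_{a_1, \ldots, a_r}) = \int_{\mathrm{Gr}(n, a_1)} \chi_{(\mathcal{C}_{a_1, \ldots, a_r})_1}(C)\, \mu_{a_2, \ldots, a_r}^{n - a_1}\big(\mathcal{C}_{C, a_2, \ldots, a_r}\big) \, d\nu_{a_1}^n(C),
\]
where $\mathcal{C}_{C, a_2, \ldots, a_r}$ is the family of $(r-1)$-chains in $C^\perp \cong \mathbb{R}^{n-a_1}$ induced from $(C, C_2, \ldots, C_r) \in \mathcal{C}_{a_1, \ldots, a_r}$ via $V \mapsto V \cap C^\perp$. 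Combined with the multiplicative factorization $\genfrac{[}{]}{0pt}{}{n}{a_1, a_2, \ldots, a_{r+1}}_{\mathbb{R}} = \genfrac{[}{]}{0pt}{}{n}{a_1}_{\mathbb{R}} \cdot \genfrac{[}{]}{0pt}{}{n - a_1}{a_2, \ldots, a_{r+1}}_{\mathbb{R}}$, the sum in \eqref{2.2} rewrites as an outer sum over $a_1$ whose inner summand is precisely the LYM sum for $(r-1)$-chains in $\mathbb{R}^{n-a_1}$.

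The main technical step will be verifying that the induction hypothesis applies to each $\mathcal{C}_{C, a_2, \ldots, a_r}$, namely that its $k$-th component family is $t_{k+1}$-chain free. This should follow from the observation that $V \mapsto V \cap C^\perp$ gives a length-preserving bijection between chains of subspaces in $\mathcal{C}_{k+1}$ that contain $C$ and chains in the image family, so any chain of length $t_{k+1}$ in the image would lift to one in $\mathcal{C}_{k+1}$, contradicting its $t_{k+1}$-chain freeness. The induction hypothesis then bounds the inner sum by $t_2 \cdots t_r$, and a final application of \eqref{a6} to the $t_1$-chain free family $\mathcal{C}_1$ bounds the outer sum by $t_1$, yielding the desired bound $\tau = t_1 \cdots t_r$.

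Finally, the upper bound $\mu_{n;r}(\mathcal{C}) \leq m_1 + \cdots + m_\tau$ follows by the same contradiction argument via Lemma \ref{d} used throughout the paper: when $\tau$ exceeds the number of valid multiflag coefficients we pad the list with zeros, and otherwise any hypothetical violation $\mu_{n;r}(\mathcal{C}) > m_1 + \cdots + m_\tau$, combined with the bound $\mu_{a_1, \ldots, a_r}^n(\mathcal{C}_{a_1, \ldots, a_r}) \leq \genfrac{[}{]}{0pt}{}{n}{a_1, \ldots, a_{r+1}}_{\mathbb{R}}$, forces via Lemma \ref{d} a contradiction with \eqref{2.2}. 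I expect the disintegration identity to be the principal analytic obstacle, but the bundle structure needed is already established in the proof of Theorem \ref{DecR}, so no new analytic machinery should be required.
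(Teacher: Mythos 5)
Your proposal is correct and follows essentially the same route as the paper: the same disintegration of $\mu_{a_1,\ldots,a_r}^n$ over $\mathrm{Gr}(n,a_1)$ via the flag bundle, the same induction on $r$ with base case the Klain--Rota inequality \eqref{a6}, and the same concluding application of Lemma \ref{d}. The only difference is that you make explicit the identification $V \mapsto V \cap C^\perp$ and the transfer of the $t_{k+1}$-chain-free condition to the restricted families, a point the paper leaves implicit in its homeomorphism claim.
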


\begin{proof}
	Given positive integers $a_1,\cdots,a_r$ satisfying $a_1+\cdots+a_r < n$, recall that
	\[
	(\mathcal{C}_{a_{1},\ldots,a_{r}})_1 = \left\{ C \in \mathcal{C}_1 \mid \mathrm{dim} (C)=a_1 \right\}.
	\]
	For any $C \in (\mathcal{C}_{a_{1},\ldots,a_{r}})_1$, we define
	\[
	\mathcal{C}_{C, a_2, \ldots, a_r} = \{(C_2, \ldots, C_r) \mid (C, C_2, \ldots, C_r) \in \mathcal{C}_{a_{1},\ldots,a_{r}}\}.
	\]
	Then $\mathcal{C}(\mathbb{R}^n, r)_{C, a_2, \ldots, a_r}$ is homeomorphic to $\mathcal{C}(\mathbb{R}^{n-a_1}, r-1)_{a_2, \ldots, a_r}$. Therefore, by regarding $\mathcal{C}_{C, a_2, \ldots,a_r}$ as a subset of $\mathcal{C}(\mathbb{R}^{n-a_1}, r-1)_{a_2, \ldots, a_r}$, we deduce from \eqref{chain} that
	\begin{equation*}
		\mu_{a_1,\ldots,a_r}^n(\mathcal{C}_{a_1,\ldots,a_r})=\int_{\mathrm{Gr}(n,a_1)}\chi_{(\mathcal{C}_{a_{1},\ldots,a_{r}})_1}(C)\mu_{a_2,\ldots,a_r}^{n-a_1}(\mathcal{C}_{C, a_2, \ldots, a_r})d\nu_{a_1}^{n}.
	\end{equation*}
	Similar to the proof of Theorem \ref{DecR}.
	We proceed by induction on $r$. For $r=1$, the inequality \eqref{2.2} reduces to the inequality \eqref{a6} in the Klain-Rota Theorem. Suppose that $r>1$ and that the inequality \eqref{2.2} holds for $r-1$. Then,
	\begin{equation*}
		\begin{aligned}
			&\sum_{a_{1}+\cdots+a_{r+1}=n}\dfrac{\mu_{a_1,\ldots,a_r}(\mathcal{C}_{a_1,\ldots,a_r})}{\genfrac{[}{]}{0pt}{}{n}{a_1,\ldots,a_{r+1}}_{\mathbb{R}}}\\
			&=\sum_{a_1=1}^{n-r}\sum_{a_2+\cdots+a_{r+1}=n-a_1}
			\dfrac{\int_{\mathrm{Gr}(n,a_1)}\chi_{(\mathcal{C}_{a_{1},\ldots,a_{r}})_1}(C)\mu_{a_2,\ldots,a_r}^{n-a_1}(\mathcal{C}_{C, a_2, \ldots, a_r})d\nu_{a_1}^{n}}{\genfrac{[}{]}{0pt}{}{n}{a_{1}}_{\mathbb{R}}\genfrac{[}{]}{0pt}{}{n-a_1}{a_2,\ldots,a_{r+1}}_{\mathbb{R}}}.
		\end{aligned}
	\end{equation*}
	It follows from the induction hypothesis that
	\[
	\sum_{a_{2}+\cdots+a_{r+1}=n-a_{1}}\dfrac{\mu_{a_2,\ldots,a_r}^{n-a_1}(\mathcal{C}_{C, a_2, \ldots, a_r})}{\genfrac{[}{]}{0pt}{}{n-a_1}{a_2,\ldots,a_{r+1}}_{\mathbb{R}}}\le t_2\cdots t_{r}
	\]
	for any $C \in (\mathcal{C}_{a_{1},\ldots,a_{r}})_1$.
	Thus we have
	\[
	\sum_{a_{1}+\cdots+a_{r+1}=n}\dfrac{\mu_{a_1,\ldots,a_r}(\mathcal{C}_{a_1,\ldots,a_r})}{\genfrac{[}{]}{0pt}{}{n}{a_1,\ldots,a_{r+1}}_{\mathbb{R}}}
	\le\sum_{a_1=1}^{n-r}\dfrac{\nu_{a_1}^{n}\big((\mathcal{C}_{a_1, \ldots, a_r})_1\big)}{\genfrac{[}{]}{0pt}{}{n}{a_{1}}_{\mathbb{R}}}t_2\cdots t_{r} \leq t_1\cdots t_{r} = \tau.
	\]
	
	Note that
	\[
	\mathcal{C}(\mathbb{R}^n,r)=\bigsqcup_{a_{1}+\cdots+a_{r+1}=n}{\mathcal{C}(\mathbb{R}^n,r)_{a_1,\ldots,a_r}}
	\]
	and $0\le\mu_{a_1,\ldots,a_r}^n(\mathcal{C}_{a_1,\ldots,a_r})\le \genfrac{[}{]}{0pt}{}{n}{a_1,\ldots,a_{r+1}}_{\mathbb{R}}$ for each $\mathcal{C}_{a_{1},\dots,a_{r}}$. The inequality $\mu_{n;r}(\mathcal{C}) \leq m_1+\cdots+m_\tau$ clearly holds when $\tau > \binom{n-1}{r}$, assuming that $m_i = 0$ for all $i > \binom{n-1}{r}$. For the case $\tau \leq \binom{n-1}{r}$, suppose for contradiction that $\mu_{n;r}(\mathcal{C}) > m_1 + \cdots + m_\tau$.
	Thus, by Lemma \ref{d},
	\begin{equation*}
		\sum_{a_{1}+\cdots+a_{r+1}=n}\dfrac{\mu_{a_1,\ldots,a_r}^n(\mathcal{C}_{a_1,\ldots,a_r})}{\genfrac{[}{]}{0pt}{}{n}{a_1,\ldots,a_{r+1}}_{\mathbb{R}}} >\tau,
	\end{equation*}
	which contradicts the inequality \eqref{2.2}. This completes the proof.
\end{proof}

\section{Generalizations of the arithmetic analog of the LYM inequality}\label{sec-5}
Let $n$ be a positive integer. By the fundamental theorem of arithmetic, there exists a unique expression $n = p_1^{e_1} \cdots p_m^{e_m}$, where $p_1, \ldots, p_m$ are distinct primes and $e_1, \ldots, e_m$ are positive integers. Let $\mathrm{Div}(n)$ denote the set of all positive divisors of $n$, partially ordered by divisibility. Then $\mathrm{Div}(n)$ forms a lattice. For any $x = p_1^{\ell_1} \cdots p_m^{\ell_m} \in \mathrm{Div}(n)$, the rank of \( x \) is given by $\rk(x) = \sum_{i=1}^{m} \ell_i$.
Building on the inequality \eqref{a8} in Anderson's theorem, the following lemma serves as a crucial step in proving our main result.

\begin{Lemma}\label{tch}
	For any $t$-chain free family $\A$ of elements of $\mathrm{Div}(n)$, we have
	\begin{equation*}
		\sum_{i=0}^{\rk(n)} \frac{|\mathcal{A}_i|}{W_i(n)} \le t,
	\end{equation*}
	where $\mathcal{A}_i$ denotes the elements of $\mathcal{A}$ of rank $i$.
\end{Lemma}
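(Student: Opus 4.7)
The plan is to reduce the claim to Anderson's theorem (inequality \eqref{a8}) by a partition argument, following the same strategy that upgrades the classical LYM inequality to the Rota-Harper inequality \eqref{b}.

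First I would apply Mirsky's theorem to the induced subposet $\mathcal{A}$ of $\mathrm{Div}(n)$. Recall that in the paper's convention a chain of length $t$ contains $t+1$ elements, so the hypothesis that $\mathcal{A}$ is $t$-chain free means that every chain in $\mathcal{A}$ has at most $t$ elements. Since Mirsky's theorem states that the minimum number of antichains needed to partition a finite poset equals the maximum number of elements in a chain of that poset, we obtain a decomposition
\[
\mathcal{A} = \mathcal{A}^{(1)} \sqcup \mathcal{A}^{(2)} \sqcup \cdots \sqcup \mathcal{A}^{(t)},
\]
where each $\mathcal{A}^{(j)}$ is an antichain in $\mathrm{Div}(n)$ (some of the $\mathcal{A}^{(j)}$ may be empty if the longest chain in $\mathcal{A}$ is shorter than $t$).

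Next I would apply Anderson's theorem \eqref{a8} to each antichain $\mathcal{A}^{(j)}$, obtaining
\[
\sum_{i \ge 0} \frac{|\mathcal{A}^{(j)}_i|}{W_i(n)} \le 1 \quad \text{for every } j \in \{1, \ldots, t\}.
\]
Summing these $t$ inequalities and using the partition identity $|\mathcal{A}_i| = \sum_{j=1}^{t} |\mathcal{A}^{(j)}_i|$ for each rank $i$, we get
\[
\sum_{i \ge 0} \frac{|\mathcal{A}_i|}{W_i(n)} = \sum_{j=1}^{t} \sum_{i \ge 0} \frac{|\mathcal{A}^{(j)}_i|}{W_i(n)} \le t,
\]
which is the desired inequality.

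There is essentially no substantial obstacle: the argument is a routine combination of Mirsky's decomposition theorem with the black-box use of Anderson's inequality, mirroring how the Rota-Harper bound for $t$-chain free families in $B(n)$ follows from the classical LYM inequality. The only point requiring care is the bookkeeping between the two common conventions for ``length of a chain'' (number of elements versus number of cover relations), which I have matched to the paper's convention above.
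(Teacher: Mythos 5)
Your proposal is correct and follows essentially the same route as the paper: decompose the $t$-chain free family into $t$ antichains via Mirsky's theorem, apply Anderson's inequality \eqref{a8} to each, and sum. Your extra care about the length convention (so that $t$-chain free means every chain in $\mathcal{A}$ has at most $t$ elements, matching Mirsky's hypothesis) is a worthwhile clarification that the paper leaves implicit.
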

\begin{proof}
	 According to Mirsky's Theorem \cite[Theorem 2]{Mirsky}, which states that any $t$-chain free family of a poset can be expressed as the disjoint union of $t$ antichains, we write $\mathcal{A} = \mathcal{A}^{(1)} \cup \cdots \cup \mathcal{A}^{(t)}$ such that each $\mathcal{A}^{(i)}$ is an antichain. Therefore, by \eqref{a8} in Anderson's Theorem,
	\begin{equation*}
			\sum_{i=0}^{\rk(n)} \frac{|\mathcal{A}_i|}{W_i(n)} = \sum_{x\in \mathcal{A}}\frac{1}{W_{\rk(x)}(n)}
			=\sum_{i=1}^{t}\sum_{x\in \mathcal{A}^{(i)}}\frac{1}{W_{\rk(x)}(n)}
			\le \sum_{i=1}^{t}1
			=t.
	\end{equation*}
\end{proof}

Recall that an \emph{$r$-decomposition} of $n$ is an $r$-tuple $\left(x_1, \ldots, x_r\right)$ of $\mathrm{Div}(n)$ such that $x_1\cdots x_r=n$. Denote by $\mathcal{D}(n,r)$ the set of all $r$-decompositions of $n$.
Let $\mathcal{D} \subseteq \mathcal{D}(n,r)$ be a subset, and $a_1, \ldots, a_r$ be non-negative integers such that $a_1 + \cdots + a_r = \rk(n)$. We write
\[
\mathcal{D}_k:=\left\{x_k\mid \left(x_1, \ldots, x_r\right)\in\mathcal{D}\right\}
\]
for each $k \in \{1, \ldots, r\}$, and 
\[
\mathcal{D}_{a_1, \ldots, a_r}=\left\{\left(x_1, \ldots, x_r\right)\in\mathcal{D}\mid \rk(x_i)=a_i,\;i=1, \ldots, r\right\}.
\]
Furthermore, we denote by $N_{a_1,\ldots,a_r}(n) = |\mathcal{D}(n, r)_{a_1, \ldots, a_r}|$. 

\begin{Theorem}\label{AP}
	Let $t_1, \ldots, t_r$ be positive integers and $\sigma = \frac{t_1t_2\cdots t_r}{\mathrm{max}\{t_1, \ldots, t_r\}}$. Suppose $\mathcal{D}$ is a family of $r$-decompositions of $n$ such that $\mathcal{D}_k$ is $t_k$-chain free for each $k \in \{1, \ldots, r\}$. Then
	\begin{equation}\label{5.1}
		\sum_{a_{1}+\cdots+a_{r}=\rk(n)}^{} \frac{|\mathcal{D}_{a_{1},\dots,a_{r}}|}{N_{a_1, \ldots,a_r}(n)} \leq \sigma.
	\end{equation}
	Consequently,
	\[
	|\mathcal{D}| \leq m_1 + \cdots + m_\sigma,
	\]
	where $m_1,\ldots,m_\sigma$ are the $\sigma$ largest $N_{a_1,\ldots,a_r}(n)$ for non-negative integers $a_1,\ldots,a_r$ with $a_1+\cdots + a_r=\rk(n)$.
\end{Theorem}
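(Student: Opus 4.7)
The plan is to proceed by induction on $r$, following the same strategy as the proofs of Theorems~\ref{Dec}, \ref{Decq}, and \ref{DecR}, with Lemma~\ref{tch} (the arithmetic Rota--Harper inequality) as the key combinatorial input. Assume without loss of generality that $t_r = \max\{t_1,\ldots,t_r\}$, so $\sigma = t_1 t_2 \cdots t_{r-1}$. For the base case $r=2$, every $2$-decomposition $(x_1,x_2)$ of $n$ satisfies $x_2 = n/x_1$, so $\mathcal{D}$ is in bijection with $\mathcal{D}_1$; the order-reversing involution $x\mapsto n/x$ of $\mathrm{Div}(n)$ sends length-$t$ chains in $\mathcal{D}_2$ to length-$t$ chains in $\mathcal{D}_1$, forcing $\mathcal{D}_1$ to be $\min\{t_1,t_2\}=\sigma$-chain free. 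Since $N_{a_1,\rk(n)-a_1}(n)=W_{a_1}(n)$, the left-hand side of \eqref{5.1} equals $\sum_{y\in\mathcal{D}_1}1/W_{\rk(y)}(n)$, which Lemma~\ref{tch} bounds by $\sigma$.

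For the inductive step $r>2$, for each $y\in\mathcal{D}_1$ I form the $(r-1)$-decomposition family $\mathcal{D}|_y = \{(x_2,\ldots,x_r) : (y,x_2,\ldots,x_r)\in\mathcal{D}\}$ of $n/y$, whose $(k-1)$-th component lies in $\mathcal{D}_k$ and is therefore $t_k$-chain free. Because $t_r$ remains the maximum of $\{t_2,\ldots,t_r\}$, the induction hypothesis applied to $\mathcal{D}|_y$ gives
\[
\sum_{(x_2,\ldots,x_r)\in\mathcal{D}|_y}\frac{1}{N_{\rk(x_2),\ldots,\rk(x_r)}(n/y)}\leq t_2\cdots t_{r-1},
\]
while Lemma~\ref{tch} on the $t_1$-chain free family $\mathcal{D}_1$ yields $\sum_{y\in\mathcal{D}_1}1/W_{\rk(y)}(n)\leq t_1$. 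Coupling these two estimates via a factor of $W_{\rk(y)}(n)^{-1}$ per $y$ would deliver the target bound $\sigma = t_1 t_2\cdots t_{r-1}$.

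The main technical difficulty, distinguishing this proof from its Boolean, $q$-analog, and continuous counterparts, is that the denominator $N_{a_1,\ldots,a_r}(n)$ does not factor pointwise. Whereas the multinomial identity $\binom{n}{a_1,\ldots,a_r}=\binom{n}{a_1}\binom{n-a_1}{a_2,\ldots,a_r}$ directly underwrites the summand-by-summand rewrite used in Theorem~\ref{Dec}, here one has only the summation identity
\[
N_{a_1,a_2,\ldots,a_r}(n)=\sum_{y:\rk(y)=a_1}N_{a_2,\ldots,a_r}(n/y),
\]
and the individual term $N_{a_2,\ldots,a_r}(n/y)$ genuinely depends on the specific divisor $y$ rather than on $\rk(y)$ alone; in particular the pointwise inequality $N_{a_1,\ldots,a_r}(n)\geq W_{a_1}(n)\,N_{a_2,\ldots,a_r}(n/y)$ can fail (e.g.\ for $n=p_1^2 p_2$ and $y=p_1$ one has $W_1(n)\cdot N_{1,1}(p_1 p_2)=4>3=N_{1,1,1}(n)$), so the naive rewrite used in the Boolean, $q$-analog, and continuous proofs is unavailable. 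The natural remedy is to interchange the order of summation, regrouping the LYM sum first by the rank vector $(a_1,\ldots,a_r)$ and pairing the pooled numerator $|\mathcal{D}_{a_1,\ldots,a_r}|=\sum_{y\in\mathcal{D}_1,\,\rk(y)=a_1}|(\mathcal{D}|_y)_{a_2,\ldots,a_r}|$ against the full denominator $\sum_{z:\rk(z)=a_1}N_{a_2,\ldots,a_r}(n/z)$; the averaging identity then supplies the $W_{a_1}(n)^{-1}$ factor needed for Lemma~\ref{tch} on $\mathcal{D}_1$, but the remaining step of matching the averaged denominator with the per-$y$ induction bound is where the proof's principal rearrangement must occur. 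Once \eqref{5.1} is established, the upper bound $|\mathcal{D}|\leq m_1+\cdots+m_\sigma$ follows immediately from Lemma~\ref{d} by the standard contradiction argument used in the proofs of Theorems~\ref{Dec}, \ref{Decq}, and \ref{DecR}.
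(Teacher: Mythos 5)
Your inductive skeleton (base case $r=2$ via the involution $x\mapsto n/x$ and Lemma~\ref{tch}, inductive step peeling off the first component, final bound via Lemma~\ref{d}) is exactly the paper's, but the proposal stops short of a proof at precisely the point you yourself flag: the ``principal rearrangement'' that would combine the per-$y$ induction bound with the LYM bound on $\mathcal{D}_1$ is never carried out. The obstruction you identify is real: the pointwise factorization $N_{a_1,\ldots,a_r}(n)=W_{a_1}(n)\,N_{a_2,\ldots,a_r}(n/y)$ fails, and so does the one-sided inequality $N_{a_1,\ldots,a_r}(n)\ge W_{a_1}(n)\,N_{a_2,\ldots,a_r}(n/y)$ that would suffice (your example: $N_{1,1,1}(p_1^2p_2)=3<4=W_1(n)\,N_{1,1}(p_1p_2)$ for $y=p_1$). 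The regrouping you propose does not obviously repair this: after pooling by rank vector one must bound a mediant $\bigl(\sum_{y}u_y\bigr)/\bigl(\sum_{z}v_z\bigr)$ with $u_y=|(\mathcal{D}|_y)_{a_2,\ldots,a_r}|$ and $v_z=N_{a_2,\ldots,a_r}(n/z)$, where $y$ ranges over the rank-$a_1$ elements of $\mathcal{D}_1$ but $z$ over \emph{all} rank-$a_1$ divisors, and no mediant or Chebyshev-type estimate turns this into the quantity $\frac{1}{W_{a_1}(n)}\sum_y u_y/v_y$ that Lemma~\ref{tch} plus the induction hypothesis would control. As written, this is an outline with its central step missing.

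That said, your diagnosis is sharper than the paper's own treatment: the second equality in \eqref{ap1} asserts exactly the pointwise factorization your counterexample refutes, so the published proof of Theorem~\ref{AP} contains the same gap, unacknowledged. (The issue is specific to $\mathrm{Div}(n)$; in the Boolean, $q$-analog, and continuous settings the number of ways to extend a first component depends only on its size or dimension, which is what licenses the summand-by-summand rewrite there.) Closing the gap seems to require an input the paper does not supply --- for instance an arithmetic Meshalkin theorem asserting $\sum|\mathcal{D}_{a_1,\ldots,a_r}|/N_{a_1,\ldots,a_r}(n)\le 1$ whenever $\mathcal{D}_1,\ldots,\mathcal{D}_{r-1}$ are antichains, presumably proved via the normalized matching property of $\mathrm{Div}(n)$ as in Anderson's theorem; Mirsky's decomposition of the $r-1$ components with smallest $t_k$ would then yield the factor $\sigma$.
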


\begin{proof}[Proof of Theorem \ref{AP}]
	Without loss of generality, assume that $t_r=\mathrm{max}\{t_1, \cdots, t_r\}$. We proceed by induction on $r$ to prove the inequality \eqref{5.1}. For $r =2$, since any $2$-decomposition $(x_1, x_2)$ of $\mathrm{Div}(n)$ satisfies $x_2 = n/x_1$, $\mathcal{D}_1$ being $t_1$-chain free implies $\mathcal{D}_2$ is $t_1$-chain free, and hence $t_2$-chain free. Therefore, the inequality \eqref{5.1} is equivalent to the LYM inequality in Lemma \ref{tch} by choosing the collection $\A$ to be $\mathcal{D}_1$.
	Now we suppose that $r>2$ and that the inequality \eqref{5.1} holds for $r-1$. Then
	\begin{equation}\label{ap1}
		\begin{aligned}
			\sum_{a_{1}+\cdots+a_{r}=\rk(n)}^{} \frac{|\mathcal{D}_{a_{1},\dots,a_{r}}|}{N_{a_1,\ldots,a_r}(n)}
			&=\sum_{(x_1,\ldots, x_r)\in \mathcal{D} }^{} \frac{1}{N_{\rk(x_1), \ldots, \rk(x_r)}(n)}\\
			&=\sum_{x\in\mathcal{D}_1}\frac{1}{N_{\rk(x),\rk(n)-\rk(x)}(n)}\sum\limits_{\substack{{ (x_1,\ldots, x_r)\in \mathcal{D}}\\{x_{1} = x}}}\frac{1}{N_{\rk(x_2),\ldots,\rk(x_r)}(n/x)}.
		\end{aligned}
	\end{equation}
	It follows from the induction hypothesis that
	\[
	\sum\limits_{\substack{{ (x_1,\ldots, x_r)\in \mathcal{D}}\\{x_{1} = x}}}\frac{1}{N_{\rk(x_2),\ldots,\rk(x_r)}(n/x)} \leq t_2\cdots t_{r-1}  .
	\]
	Therefore, by Lemma \ref{tch}, the last expression in \eqref{ap1} is at most $\sigma= t_1t_2\cdots t_{r-1}$, which completes the proof of the inequality \eqref{5.1}.
	
	Note that
	\[
	\mathcal{D} = \bigsqcup_{a_{1}+\cdots+a_{r}=\rk(n)}\mathcal{D}_{a_{1},\dots,a_{r}}
	\]
	and $0\le|\mathcal{D}_{a_{1},\dots,a_{r}}|\le N_{a_1,\ldots,a_r}(n)$. Note that $|\mathcal{D}| \leq m_1 + \cdots + m_\sigma$ clearly holds when $\sigma > \binom{n+r-1}{r-1}$, assuming that $m_i = 0$ for all $i > \binom{n+r-1}{r-1}$. For the case $\sigma \leq \binom{n+r-1}{r-1}$, suppose for contradiction that $|\mathcal{D}| > m_1 + \cdots + m_\sigma$.
	Then, by Lemma \ref{d} we have
	\begin{equation*}
		\sum_{a_{1}+\cdots+a_{r}=\rk(n)}^{} \frac{|\mathcal{D}_{a_{1},\dots,a_{r}}|}{N_{a_1,\ldots,a_r}(n)} > \sigma,
	\end{equation*}
	which contradicts the inequality \eqref{5.1}. This completes the proof.
\end{proof}

An $r$-multichain in $\mathrm{Div}(n)$ is a sequenece $(x_1, \ldots, x_r)$ of $\mathrm{Div}(n)$ with $x_1 \leq x_2 \leq \cdots \leq x_r$. We denote by $\mathcal{C}(n, r)$ the collection of all $r$-multichains in $\mathrm{Div}(n)$. Let $\mathcal{C} \subseteq \mathcal{C}(n,r)$ and $a_1, \ldots, a_r$ be non-negative integers such that $a_1 + \cdots + a_r \leq \rk(n)$. Denote by 
\[
\mathcal{C}_k = \{x_k \mid (x_1, \ldots, x_k, \ldots, x_r) \in \mathcal{C}\}
\]
for each $k \in \{1, \ldots, r\}$, and denote by
\[
\mathcal{C}_{a_{1},\dots,a_r} = \{(x_1, \ldots, x_r) \in \mathcal{C}\mid \rk(x_i) = a_1 + \cdots + a_i, \; i=1, \ldots, r\}.
\]
Through a bijection similar to that in Remark \ref{remk2}, we can quickly observe that 
\[
|\mathcal{C}(n, r)_{a_1, \ldots, a_r}| = |\mathcal{D}(n, r)_{a_1, \ldots, a_r, a_{r+1}}| = N_{a_1, \ldots, a_r, a_{r+1}}(n),
\]
where $a_{r+1} = \rk(n) - \sum_{i=1}^{r}a_i$.

\begin{Theorem}\label{APc}
	Let $t_1, \ldots, t_r$ be positive integers and $\tau = t_1t_2\cdots t_r$. Suppose $\mathcal{C}$ is a family of $r$-multichains of $\mathrm{Div}(n)$ such that $\mathcal{C}_k$ is $t_k$-chain free for each $k \in \{1, \ldots, r\}$. Then
	\begin{equation}\label{5.6}
		\sum_{a_{1}+\cdots+a_{r+1}=\rk(n)}^{} \frac{|\mathcal{C}_{a_{1},\dots,a_{r}}|}{N_{a_1,\ldots,a_{r+1}}(n)} \leq \tau.
	\end{equation}
	Consequently, 
	\[
	|\mathcal{C}| \leq m_1 + \cdots + m_\tau,
	\]
	where $m_1, \ldots, m_\tau$ are the $\tau$ largest $N_{a_1,\ldots,a_{r+1}}(n)$ for non-negative integers $a_1, \ldots, a_{r+1}$ with $a_1+\cdots + a_{r+1}=\rk(n)$.
\end{Theorem}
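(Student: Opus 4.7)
The plan is to prove Theorem \ref{APc} by induction on $r$, mimicking the structure of Theorem \ref{Ch} for the Boolean lattice and Theorem \ref{Chq} for the $q$-analog, with Lemma \ref{tch} playing the role that the Rota-Harper inequality plays in those proofs.

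For the base case $r = 1$, a $1$-multichain is just a single element of $\mathrm{Div}(n)$, so $\mathcal{C}$ is identified with $\mathcal{C}_1$, and the denominators $N_{a_1, a_2}(n)$ with $a_1 + a_2 = \rk(n)$ collapse to $W_{a_1}(n)$. Thus \eqref{5.6} reduces directly to the inequality of Lemma \ref{tch} applied to $\mathcal{C}_1$.

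For the inductive step, assume the inequality holds for $r-1$. I would partition $\mathcal{C}$ by fixing its smallest element. For each $x \in \mathcal{C}_1$, the map $(x, x_2, \ldots, x_r) \mapsto (x_2/x, \ldots, x_r/x)$ carries $\{(x_1, \ldots, x_r) \in \mathcal{C} : x_1 = x\}$ to an $(r-1)$-multichain family $\mathcal{C}^x \subseteq \mathcal{C}(n/x, r-1)$, and each component $(\mathcal{C}^x)_k$ inherits $t_{k+1}$-chain-freeness from $\mathcal{C}_{k+1}$ since divisibility is preserved under division by the common divisor $x$. Factoring the denominator as
\[
N_{\rk(x_1),\,\rk(x_2)-\rk(x_1),\,\ldots,\,\rk(n)-\rk(x_r)}(n) = N_{\rk(x_1),\,\rk(n)-\rk(x_1)}(n)\cdot N_{\rk(x_2)-\rk(x_1),\,\ldots,\,\rk(n)-\rk(x_r)}(n/x_1),
\]
as used in the proof of Theorem \ref{AP}, the sum in \eqref{5.6} splits into an outer sum over $x \in \mathcal{C}_1$ and an inner sum over $\mathcal{C}^x$. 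The induction hypothesis bounds each inner sum by $t_2 t_3 \cdots t_r$, and a final application of Lemma \ref{tch} to $\mathcal{C}_1$ bounds the resulting outer sum by $t_1$, yielding $\tau = t_1 \cdots t_r$ as desired.

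The consequent upper bound $|\mathcal{C}| \leq m_1 + \cdots + m_\tau$ then follows from Lemma \ref{d} by the familiar contradiction argument: if this bound failed, Lemma \ref{d} applied to the summands $|\mathcal{C}_{a_1, \ldots, a_r}|/N_{a_1, \ldots, a_{r+1}}(n)$ would force the LYM sum strictly above $\tau$, contradicting \eqref{5.6}. The main obstacle I anticipate is rigorously justifying the denominator factorization above: unlike multinomial or Gaussian coefficients, the counts $N_{\cdot}(n)$ depend on the specific prime factorization of $n$ rather than only on the ranks, so the factorization is not an obvious algebraic identity and may need to be handled by a more careful combinatorial or averaging argument, or by grouping multichains so that $n/x_1$ has a fixed prime structure.
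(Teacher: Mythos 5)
Your proposal reproduces the paper's own proof essentially line for line: induction on $r$ with Lemma~\ref{tch} as the base case, peeling off the first coordinate $x_1$, factoring the denominator, bounding the inner sum by the induction hypothesis and the outer sum by Lemma~\ref{tch}, and then deriving the cardinality bound from Lemma~\ref{d} by contradiction. So in terms of strategy there is nothing new to compare.

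However, the ``obstacle'' you flag at the end is not a technicality to be smoothed over --- it is a genuine gap, and it is present in the paper's proof as well. The step in question requires the identity
\[
N_{\rk(x_1),\,\rk(x_2)-\rk(x_1),\,\ldots,\,\rk(n)-\rk(x_r)}(n)\;=\;N_{\rk(x_1),\,\rk(n)-\rk(x_1)}(n)\cdot N_{\rk(x_2)-\rk(x_1),\,\ldots,\,\rk(n)-\rk(x_r)}(n/x_1),
\]
and this is false in general: the right-hand side depends on \emph{which} divisor $x_1$ of the given rank is chosen, while the left-hand side does not. Concretely, take $n=p^2q$ with $p\neq q$ prime and ranks $(1,1,1)$. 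Then $N_{1,1,1}(p^2q)=3$ (the orderings of the multiset $\{p,p,q\}$), while $N_{1,2}(p^2q)=W_1(p^2q)=2$ and $N_{1,1}(n/x_1)$ equals $2$ if $x_1=p$ but $1$ if $x_1=q$; so the purported product is $4$ or $2$, never $3$. The corresponding inequality $N_{a_1,\ldots,a_r}(n)\ge W_{a_1}(n)\,N_{a_2,\ldots,a_r}(n/x_1)$, which would suffice for an upper bound, also fails (take $x_1=p$ above). The identity does hold when $n$ is squarefree (where $\mathrm{Div}(n)\cong B(m)$ and $N$ reduces to a multinomial coefficient) and the issue does not arise for $r=1$, but for general $n$ and $r\ge 2$ the inductive step as written does not go through. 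Your instinct that the factorization ``may need to be handled by a more careful combinatorial or averaging argument'' is exactly right; as it stands, neither your proposal nor the paper supplies that argument, so this step must be repaired (e.g., by an argument that works with $\sum_{x}N_{a_2,\ldots,a_r}(n/x)=N_{a_1,\ldots,a_r}(n)$ directly, or by a normalized-matching-type approach) before the proof is complete.
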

  
\begin{proof}[Proof of Theorem \ref{APc}]
	We proceed by induction on $r$. For $r=1$, the inequality \eqref{5.6} reduces to the inequality stated in Lemma \ref{tch}. Suppose that $r>1$ and that the inequality \eqref{5.6} holds for $r-1$. Then
	\begin{equation*}
		\begin{aligned}
			&\sum_{a_1 + \cdots + a_{r+1} = \rk(n)}\frac{|\mathcal{C}_{a_{1},\dots,a_{r}}|}{N_{a_1,\ldots,a_{r+1}}(n)}\\
			&=\sum_{(x_1, \ldots, x_r)\in\mathcal{C}}\frac{1}{N_{\rk(x_1),\rk(x_2)-\rk(x_1),\ldots,\rk(x_{r})-\rk(x_{r-1}),\rk(n)-\rk(x_{r})}(n)}\\
			&=\sum_{x\in\mathcal{C}_1}\frac{1}{N_{\rk(x),\rk(n)-\rk(x)}(n)}\sum\limits_{\substack{{ (x_1,\ldots, x_r)\in \mathcal{C}}\\{x_{1} = x}}}\frac{1}{N_{\rk(x_2)-\rk(x_1),\ldots,\rk(x_{r})-\rk(x_{r-1}),\rk(n)-\rk(x_{r})}(n/x)}\\
			&\leq  \sum_{x\in\mathcal{C}_1}\frac{1}{N_{\rk(x),\rk(n)-\rk(x)}(n)} t_2\cdots t_{r}.
		\end{aligned}
	\end{equation*}
	It follows from the induction hypothesis and Lemma \ref{tch} that the last expression is at most $\tau = t_1t_2\cdots t_r$.
	
	Note that
	\[
	\mathcal{C} = \bigsqcup_{a_{1}+\cdots+a_{r+1}=\rk(n)}\mathcal{C}_{a_{1},\dots,a_{r}}
	\]
	and $0\le|\mathcal{C}_{a_{1},\dots,a_{r}}|\le N_{a_1, \ldots, a_{r+1}}(n)$ for each $\mathcal{C}_{a_{1},\dots,a_{r}}$. Then $|\mathcal{C}| \leq m_1 + \cdots + m_\tau$ clearly holds when $\tau > \binom{n+r}{r}$, assuming that $m_i = 0$ for all $i > \binom{n+r}{r}$. For the case $\tau\leq \binom{n+r}{r}$, suppose for contradiction that $|\mathcal{C}| > m_1 + \cdots + m_\tau$.
	Then, by Lemma \ref{d} we have
	\begin{equation*}
		\sum_{a_1 + \cdots + a_{r+1} = \rk(n)}\frac{|\mathcal{C}_{a_{1},\dots,a_{r}}|}{N_{a_1,\ldots,a_{r+1}}(n)} > \tau
	\end{equation*}
	which contradicts the inequality \eqref{5.6}. This completes the proof.
\end{proof}

\section*{Acknowledgements}
This paper is supported by the National Natural Science Foundation of China (Grant No. 12571350) and the Guangdong Basic and Applied Basic Research Foundation (Grant No. 2025A1515010457).


\begin{thebibliography}{99}\setlength{\itemsep}{-.0mm}
	\bibitem{Ian}
	I. Anderson, {\it Combinatorics of finite sets}, 2nd ed., Dover Publications, Mineola, NY, 2002. Reprint of the 1987 edition published by Oxford University Press.
	
	\bibitem{Bollobas}
	B. Bollob\'as, On generalized graphs, Acta Math. Acad. Sci. Hungar. {\bf 16} (1965), 447--452.
	
	\bibitem{Beck and Zaslavsky1}
	M. Beck and T. Zaslavsky, A shorter, simpler, stronger proof of the Meshalkin-Hochberg-Hirsch bounds on componentwise antichains, J. Combin. Theory Ser. A {\bf 100} (2002), no.~1, 196--199.
	
	\bibitem{Beck and Zaslavsky2}
	M. Beck and T. Zaslavsky, A Meshalkin theorem for projective geometries, J. Combin. Theory Ser. A {\bf 102} (2003), no.~2, 433--441.
	
	\bibitem{Engel}
	K. Engel, {\it Sperner Theory}, Encyclopedia of Mathematics and its Applications {\bf 65}, Cambridge University Press, Cambridge, 1997.
	
	\bibitem{Erdos}
	P. Erd\H{o}s, On a lemma of Littlewood and Offord, Bull. Amer. Math. Soc. {\bf 51} (1945) 898--902.
	

	
	\bibitem{Haar2}
	A. Haar, Der Massbegriff in der Theorie der kontinuierlichen Gruppen, Ann. of Math. (2) {\bf 34} (1933), no.~1, 147--169.
	
	\bibitem{Hochberg}
	M. Hochberg and W. M. Hirsch, Sperner families, s-systems, and a theorem of Meshalkin, Annals New York Acad. Sci., {\bf 175} (1970), 224--237.
	
	
	\bibitem{Klain and Rota}
	D.~A. Klain and G.-C. Rota, A continuous analogue of Sperner's theorem, Comm. Pure Appl. Math. {\bf 50} (1997), no.~3, 205--223.
	
	\bibitem{KlainRota}
	D.~A. Klain and G.-C. Rota, {\it Introduction to geometric probability}, Lezioni Lincee, Cambridge Univ. Press, Cambridge, 1997.
	
	\bibitem{Lubell}
	D. Lubell, A short proof of Sperner's lemma, J. Combinatorial Theory {\bf 1} (1966), 299.
	
	\bibitem{Meshalkin}
	L.~D. Meshalkin, Generalization of Sperner's theorem on the number of subsets of a finite set, Teor. Verojatnost. i Primenen. {\bf 8} (1963), 219--220.
	
	\bibitem{Mirsky}
	L. Mirsky, A dual of Dilworth's theorem, Amer. Math. Monthly {\bf78} (1971), no. 8, 876--877.
	
	\bibitem{Haar}
	L. Nachbin, {\it The Haar integral}, D. Van Nostrand Co., Inc., Princeton, N.J.-Toronto-London, 1965.
	
	\bibitem{Rota and Harper}
	G.-C. Rota and L.~H. Harper, Matching theory, an introduction, in {\it Advances in Probability and Related Topics, Vol. 1}, Dekker, New York (1971), 169--215.
	
	
	\bibitem{Sperner}
	E. Sperner, Ein Satz \"uber Untermengen einer endlichen Menge, Math. Z. {\bf 27} (1928), no.~1, 544--548.
	
	\bibitem{Norman}
	N.~E. Steenrod, {\it The Topology of Fibre Bundles}, Princeton Mathematical Series, vol. 14, Princeton Univ. Press, Princeton, NJ, 1951.

	\bibitem{Warner}
	F.~W. Warner, {\it Foundations of differentiable manifolds and Lie groups}, corrected reprint of the 1971 edition,
	Graduate Texts in Mathematics, 94, Springer, New York-Berlin, 1983.
	
	\bibitem{Yamamoto}
	K. Yamamoto, Logarithmic order of free distributive lattice, J. Math. Soc. Japan {\bf 6} (1954), 343--353.	
	
\end{thebibliography}
\end{document}